\definecolor{purple}{HTML}{961C8C}
\theoremstyle{plain}
\newtheorem{theorem}{\bf Theorem}[subsection]
\newtheorem*{conjecture*}{Conjecture}
\newtheorem{cor}[theorem]{Corollary}
\newtheorem{lemma}[theorem]{Lemma}
\newtheorem{prp}[theorem]{Proposition}
\newtheorem{thmmain}{\bf Theorem}
\theoremstyle{definition}
\newtheorem*{rem*}{Remark}
\newtheorem{definition}[theorem]{Definition}
\newtheorem{example}[theorem]{Example}
\newtheorem{examples}[theorem]{Examples}
\DeclareMathAlphabet{\mathpzc}{OT1}{pzc}{m}{it}
\newcommand{\conv}{\operatorname{conv} }
\newcommand{\aff}{\operatorname{aff} }
\newcommand{\F}{\operatorname{F} }
\newcommand{\R}{\mathbb{R}}
\newcommand{\e}{\varepsilon}
\newcommand{\Z}{\mathbb{Z}}
\renewcommand{\e}{\varepsilon}
\newcommand{\cm}[1]{}
\newcommand{\Q}{\mathrm{Q}}
\newcommand{\PROJ}[1]{\mathrm{PROJ}\left[{#1}\right]}
\newcommand{\dH}{\mathrm{d}_{\mathrm{H}}}
\newcommand{\FUNC}[1]{\mathrm{FUNC}\left[{#1}\right]}
\newcommand{\COOR}[1]{\mathrm{COOR}\left[{#1}\right]}
\newcommand{\kF}[1]{[{#1}\text{-facet}]}
\newcommand{\veczero}{{0}}
\newcommand{\vect}[1]{{#1}}
\newcommand{\ve}{{e}}
\newcommand\Defn[1]{\emph{\color{RubineRed}#1}}
\begin{document}

\author{
Karim A. Adiprasito \thanks{Supported by DFG within the research training group ``Methods for Discrete Structures'' (GRK1408) and by the Romanian NASR, CNCS – UEFISCDI, project PN-II-ID-PCE-2011-3-0533.}\\
\small Institut f\" ur Mathematik, FU Berlin\\
\small Arnimallee 2\\
\small 14195 Berlin, Germany\\
\small \url{adiprasito@math.fu-berlin.de}
\and
\setcounter{footnote}{2}
Arnau Padrol \thanks{Supported by the DFG Collaborative Research Center SFB/TR~109 ``Discretization in Geometry and Dynamics''} \\
\small Institut f\" ur Mathematik, FU Berlin\\
\small Arnimallee 2\\
\small 14195 Berlin, Germany\\
\small \url{arnau.padrol@fu-berlin.de}
}

\date{June 12, 2013}
\title{A universality theorem for projectively unique polytopes \\ and a conjecture of Shephard}
\maketitle

\begin{abstract}
We prove that every polytope described by algebraic coordinates is the face of a projectively unique polytope. This provides a universality property for projectively unique polytopes. Using a closely related result of Below, we construct a combinatorial type of $5$-dimensional polytope that is not realizable as a subpolytope of any stacked polytope. This disproves a classical conjecture in polytope theory, first formulated by Shephard in the seventies.
\end{abstract}

By employing a technique developed by Adiprasito and Ziegler~\cite{AZ12}, we prove the following universality theorem for projectively unique polytopes. 
\begin{thmmain}\label{thm:ratprj}
For any algebraic polytope $P$, there exists a polytope $\widehat{P}$ that is projectively unique and that contains a face projectively equivalent to $P$.
\end{thmmain}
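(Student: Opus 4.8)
The plan is to transport the ``construction-sequence'' method behind the Mn\"ev--Richter-Gebert universality theorems into the world of projectively unique polytopes, using the Lawrence-extension and gluing machinery developed in \cite{AZ12}. Since it suffices to produce a face \emph{projectively equivalent} to $P$, I first use a projective change of coordinates to put $P$ into a normal form: a realization in which a distinguished affine frame of vertices sits at standard positions, and every other vertex coordinate is obtained from $0$ and $1$ by a straight-line program over $\mathbb{Q}$ that uses the field operations together with ``extract the prescribed real root of a polynomial.'' Such a presentation exists because the vertex coordinates of an algebraic polytope generate a finite real algebraic extension of $\mathbb{Q}$, hence a tower of simple extensions; a generic choice of frame also lets me assume that no step of the program is degenerate (no division by zero, no collision of auxiliary points, no point landing on the wrong side of a facet).

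The polytope $\widehat{P}$ is then assembled from small rigid ``gadgets.'' I will use the following closure properties, each either taken from \cite{AZ12} or established there by the same technique: cones, pyramids and one-point suspensions over a projectively unique polytope are projectively unique; the Lawrence extension of a projectively unique polytope at a projectively determined point is projectively unique and retains the original polytope as a face; and a connected sum of two projectively unique polytopes along a common projectively unique face is again projectively unique. For each elementary operation of the straight-line program I build a projectively unique polytope $G$ carrying a distinguished projectively unique face $F_G$ on which a prescribed tuple of points realizes, in every realization of $G$, the relation $c=a+b$, respectively $c=a\cdot b$, respectively ``$c$ is the designated real root of $p(a_1,\dots,a_k,x)=0$.'' The additive and multiplicative gadgets are polytopal incarnations of the classical von Staudt constructions -- exactly as in Richter-Gebert's universality theorem for $4$-polytopes -- made \emph{rigid} rather than flexible by Lawrence-extending every auxiliary point, so that on $F_G$ only the intended projective freedom survives; the root-extraction gadget is obtained by chaining multiplication and addition gadgets to encode the polynomial relation and then pinning the correct real root by an ordering and face-lattice condition that is forced by the oriented matroid, in the spirit of Below's rigid realizations.

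Following the straight-line program, I glue the gadgets one at a time along their designated faces, each new gadget reading its inputs off the face produced so far and appending its output coordinate. By the closure properties the partial polytope is projectively unique at every stage, and by construction the points on the current active face realize exactly the coordinates computed by the program up to that point, with no residual degrees of freedom. After the last step the active face carries a point configuration projectively equivalent to the vertex set of $P$; coning or connected-summing a copy of $P$ onto this face then yields $\widehat{P}$, a projectively unique polytope containing a face projectively equivalent to $P$.

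The main obstacle is the interface between gadgets rather than any individual gadget: one must ensure that each gluing preserves \emph{simultaneously} convexity and projective uniqueness, which forces the shared faces to be projectively unique and transversally rigid, and one must check that the internal Lawrence extensions rigidifying a gadget neither over-constrain the intended relation nor create coincident vertices or nonconvexities. Equivalently, the crux is to show that every assembled stage admits \emph{no} realizations other than its projective images, and it is precisely here that the technique of \cite{AZ12} does the essential work, the genericity built into the normal form being what keeps all intermediate configurations in convex position.
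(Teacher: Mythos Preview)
Your proposal shares the paper's core ingredients---von Staudt constructions to encode algebraic coordinates, and Lawrence extensions to pass to a projectively unique polytope---but organizes them differently, and that reorganization creates a genuine gap. The paper does \emph{not} build and glue polytopal gadgets. It works entirely at the level of \emph{point configurations}: it first shows that the lattice cube $\Q^d$ is a projectively unique point configuration, then uses planar von Staudt ``functional arrangements'' $\FUNC{\psi}$ and the isolating bounds $\zeta^-,\zeta^+$ to produce, for every algebraic coordinate $\zeta$, a configuration $\COOR{\zeta}$ framed by $\Q^2+\mathbf{1}$; it takes the union over all vertex coordinates to obtain a single projectively unique configuration $\COOR{P}\supset \F_0(P)$; and only then, exactly once, does it convert to a polytope via the weak-projective-triple/subdirect-cone lemma and a single Lawrence extension. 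The point of this architecture is that for point configurations ``gluing'' is just union, and the trivial Lemma~\ref{lem:union} guarantees that each union stays projectively unique.

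Your route instead asks for repeated connected sums of projectively unique \emph{polytopes} along projectively unique faces. That closure property is not in \cite{AZ12} and fails without extra hypotheses: if $P_1$ and $P_2$ are projectively unique in $\R^d$ and share a facet $F$, then in a realization of $P_1\# P_2$ one may normalize so that the $P_1$-side is standard, which fixes $F$ and hence a projective basis of the hyperplane $\aff(F)$; but the projective transformations of $\R^d$ fixing that hyperplane pointwise still form a positive-dimensional group, so the $P_2$-side can slide transversally. Concretely, your ``active face'' carries only $(d-1)$-dimensional data, and nothing in the sketch pins a $d$-dimensional projective frame across each interface. You acknowledge this as ``the main obstacle,'' but it is not merely a bookkeeping issue: it is precisely the problem that the paper's weak-projective-triple mechanism is designed to avoid, by postponing the passage from point configuration to polytope until the very end.
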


Here, a polytope is \Defn{algebraic} if the coordinates of all of its vertices are real algebraic numbers, and a polytope $P$ in $\R^d$ is \Defn{projectively unique} if any polytope ${P}'$ in $\R^d$ combinatorially equivalent to $P$ is \Defn{projectively equivalent} to~$P$. In other words, $P$ is projectively unique if for every polytope ${P}'$ combinatorially equivalent to $P$, there exists a projective transformation of $\R^d$ that realizes the given combinatorial isomorphism from $P$ to~${P}'$.

\begin{rem*} Theorem~\ref{thm:ratprj} is sharp: We cannot hope that every polytope is the face of a projectively unique polytope. Indeed, it is a consequence of the Tarski-Seidenberg Theorem \cite{BierstoneMilman, Lindstrom} that every combinatorial type of polytope has an algebraic realization. In particular, every projectively unique polytope, and every single one of its faces, must be projectively equivalent to an algebraic polytope. Hence, a $d$-dimensional polytope with $n\ge d+3$ vertices whose set of vertex coordinates consists of algebraically independent transcendental numbers is not a face of any projectively unique polytope.
\end{rem*}

\begin{rem*}
A consequence of Theorem~\ref{thm:ratprj} is that for every finite field extension $F$ over $\mathbb{Q}$, there exists combinatorial type of polytope $\mathrm{NR}(F)$ that is projectively unique, but not realizable in any vector space over $F$. This extends on a famous result of Perles, who constructed a projectively unique polytope that is not realizable in any rational vector space, cf.\ \cite[Sec.\ 5.5, Thm.\ 4]{Grunbaum}.
\end{rem*}

In the second part of this paper, we consider a conjecture of Shephard, who asked whether every polytope is a \Defn{subpolytope} of some stacked polytope, i.e.\ whether it can be obtained as the convex hull of some subset of the vertices of some stacked polytope. While he proved this wrong in~\cite{Shephard74}, he conjectured it to be true in a combinatorial sense.

\begin{conjecture*}[Shephard~\cite{Shephard74}, Kalai {\cite[p.\ 468]{Kalai}}, \cite{KalaiKyoto}]\label{con:shka}
For every $d\ge 0$, every combinatorial type of $d$-dimensional polytope can be realized using subpolytopes of $d$-dimensional stacked polytopes.
\end{conjecture*}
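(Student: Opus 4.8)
The plan is to produce a single combinatorial type $Q$ of $5$-polytope and to prove that \emph{no} geometric realization of $Q$ occurs as a subpolytope of a stacked $5$-polytope; since the conjecture asserts the opposite, this refutes it. It is worth keeping in mind that Shephard already disproved the realization-dependent version in~\cite{Shephard74} by exhibiting a \emph{specific} polytope that is not a subpolytope of any stacked polytope; the content of the combinatorial conjecture, and hence of its refutation, lies in controlling \emph{all} realizations of the combinatorial type at once.

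The reduction is to realizability over $\mathbb{Q}$. The key observation on the stacked side is that stacked polytopes have maximally flexible realization spaces: every combinatorial stacked $d$-polytope is realizable over $\mathbb{Q}$, and more strongly, any realization can be perturbed one stacked vertex at a time to a nearby realization with rational vertices that is combinatorially the same stacked polytope — each new vertex ranges over a nonempty open region cut out by inequalities with rational coefficients, which contains rational points arbitrarily close to the original one. Now suppose $P \cong \conv(V')$, where $V'$ is a subset of the vertices of a stacked polytope $S$ and every point of $V'$ is a vertex of $\conv(V')$. If $P$ is \emph{simplicial}, then ``$\conv(V')$ has the combinatorial type of $P$'' imposes no affine dependence on $V'$ beyond the obvious ones and is therefore an \emph{open} condition on the vertex coordinates of $S$; combining this with rational flexibility, it holds at some rational realization of $S$, and hence $P$ is realizable over $\mathbb{Q}$. (For non-simplicial $P$ one instead needs that every oriented matroid refining the face lattice of $P$ is realizable over $\mathbb{Q}$.) Thus it suffices to construct a $5$-polytope $Q$ that is \emph{not} realizable over $\mathbb{Q}$ and whose non-rationality is of this ``combinatorially certified'' kind — in particular a non-rational simplicial $5$-polytope would do.

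Carrying this out in dimension as low as $5$ is the real work, and this is where Theorem~\ref{thm:ratprj} (and the Adiprasito--Ziegler technique behind it) is used. Applying Theorem~\ref{thm:ratprj} to an algebraic polytope assembled from a non-rational point configuration — for instance a configuration all of whose realizations require an element outside a prescribed field $F$ with $\sqrt{2}\notin F$, in the spirit of the second Remark — yields a \emph{projectively unique} polytope $\widehat{P}$ that is non-rational; projective uniqueness guarantees that this is a property of the combinatorial type, not of a poorly chosen realization. Two things then remain: (i) bring the ambient dimension down to $5$, since the universality construction underlying Theorem~\ref{thm:ratprj} is extravagant in dimension, which is exactly why one substitutes Below's more economical construction; and (ii) ensure the non-rationality is visible to the face lattice alone, so that the openness argument of the previous paragraph applies — e.g.\ by producing a rigid simplicial refinement, or by arguing directly with the oriented matroid.

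I expect steps (i) and (ii) to be the principal obstacle: they pull against one another, because manufacturing a rigid, non-rational polytope naturally calls for many vertices in special position (codimension in the realization space is easy to lose), whereas pinning the dimension at $5$ and preserving a simplicial or combinatorially-certified structure severely limits the available room. By contrast, the stacked-polytope side — flexibility, density of rational realizations, and stability of all incidences under small rational perturbation — is routine, but it must be set up with care, since the entire argument rests on ``$\conv(V')$ has type $Q$'' being an open condition for the particular $Q$ that is constructed.
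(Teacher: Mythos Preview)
Your reduction is sound as far as it goes: if a \emph{simplicial} combinatorial type $Q$ is realized as a subpolytope of a stacked polytope, then---by the rational flexibility of stacked polytopes together with the openness of a simplicial type under perturbation---$Q$ is realizable over~$\mathbb{Q}$. But the object you then need, a simplicial $5$-polytope that is not realizable over~$\mathbb{Q}$, is not supplied by either tool you invoke, and this is a genuine gap rather than a deferred technicality. Theorem~\ref{thm:ratprj} outputs projectively unique polytopes only in very large dimension. Below's construction (Theorem~\ref{thm:Below}) adds just two dimensions, but its output $\widehat{P}$ contains the input $P$ as a proper face and is therefore never simplicial; and to make the resulting $5$-polytope non-rational via a face you would need a non-rational $3$-polytope as input, which by Steinitz's theorem does not exist. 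Starting instead from a non-rational $4$-polytope already overshoots to dimension~$6$ and still fails simpliciality. The non-simplicial escape route does not help either: once $Q$ is non-simplicial, perturbing the ambient stacked polytope to rational coordinates only produces a rational realization of some \emph{refinement} of~$Q$, so a contradiction would require a $5$-polytope none of whose simplicial perturbations is rational---far stronger than mere non-rationality, and not something the cited constructions provide.

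The paper's disproof takes an entirely different route and sidesteps both rationality and simpliciality. The obstruction is metric, extending Shephard's original argument: any subpolytope of a $\kF{k}$-stacked $d$-polytope satisfies $\dH(\,\cdot\,,B_1(0))\ge 2^{-4k-10}\cdot 3^{-2}$ (Corollary~\ref{cor:sh}), so a fixed $3$-polytope $P$ chosen sufficiently close to $B_1(0)$---and hence any projective image of it---is not a subpolytope of any $\kF{6}$-stacked $3$-polytope. The one new combinatorial ingredient is Proposition~\ref{prp:ratsub}: every face of a subpolytope of a stacked $5$-polytope is itself a subpolytope of a $\kF{6}$-stacked polytope. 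Below's theorem is then used not to enforce non-rationality but simply to pin one $3$-face of a $5$-polytope $\widehat{P}$ projectively to $P$ in every realization; were some realization of $\widehat{P}$ a subpolytope of a stacked $5$-polytope, that $3$-face would simultaneously be projectively equivalent to the round $P$ and a subpolytope of a $\kF{6}$-stacked $3$-polytope, a contradiction. Your rationality reduction would give a pleasantly short argument if the missing ingredient were in hand; the paper's Hausdorff-distance argument is what actually lands in dimension~$5$.
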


The conjecture is true for $3$-dimensional polytopes, as seen by K\"omhoff in~\cite{Komhoff80}, but remained open for dimensions $d> 3$. On the other hand, Theorem~\ref{thm:ratprj} encourages us to attempt a disproof of Shephard's conjecture. The idea is to use the universality theorem above to provide a projectively unique polytope that is not a subpolytope of any stacked polytope. Since any admissible projective transformation of a stacked polytope is a stacked polytope, no realization of the polytope provided this way is a subpolytope of any stacked polytope. 

Unfortunately, the method of Theorem~\ref{thm:ratprj} is highly ineffective: The counterexample to Shephard's conjecture it yields is of a very high dimension. We use a refined method, building on the same idea, to present the following result.

\begin{thmmain}\label{thm:proj}
There exists a combinatorial type of $5$-dimensional polytope that cannot be realized as a subpolytope of any stacked polytope.
\end{thmmain}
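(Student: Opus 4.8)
We will combine two facts: \emph{(i)} there is a projectively unique $5$-polytope $C$ that admits no realization with rational vertex coordinates --- indeed, in every realization of $C$ a prescribed cross-ratio takes a fixed irrational algebraic value; and \emph{(ii)} if a projectively unique polytope is realizable as a subpolytope of a stacked polytope, then it is realizable with rational vertex coordinates. Granting \emph{(i)} and \emph{(ii)}, the combinatorial type $C$ from \emph{(i)} cannot be realized as a subpolytope of any stacked polytope, which is exactly the assertion of the theorem. (By the quoted fact that admissible projective images of stacked polytopes are again stacked, statement \emph{(ii)} may equivalently be checked on a single conveniently chosen realization of the projectively unique polytope.)

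For \emph{(i)}, applying Theorem~\ref{thm:ratprj} directly is useless here: the polytope it delivers has a huge dimension. Instead we run the mechanism behind Theorem~\ref{thm:ratprj} exactly once, seeded with an economical building block. A result of Below furnishes a $4$-dimensional polytope $B$ whose realization space, after the standard stable-equivalence reductions, is governed by a prescribed polynomial equation; taking this equation to be a quadratic with no rational root makes $B$ a non-rational $4$-polytope (it need not, and typically will not, be projectively unique at this stage). One then performs a single, carefully dimensioned instance of the subdirect-sum/wedge construction of Adiprasito and Ziegler~\cite{AZ12} --- the operation that drives the proof of Theorem~\ref{thm:ratprj} --- tuned so that the resulting polytope $C$ is $5$-dimensional, is now projectively unique, and still contains a face projectively equivalent to $B$. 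Since $C$ contains a projective copy of the non-rational polytope $B$, no realization of $C$ is rational, and the prescribed cross-ratio inside the copy of $B$ keeps its fixed irrational value.

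For \emph{(ii)}, suppose that $C$ is realized as $\conv V$ with $V\subseteq\vertices Q$, where $Q$ is stacked with dual tree $\T$. First, whenever a leaf of $\T$ is a vertex $w\notin V$, we delete $w$: this produces a smaller stacked polytope whose vertex set still contains $V$, and it does not affect $\conv V$. Iterating, we reach the situation in which every leaf of $\T$ is a vertex of $V$ (or $\T$ is a single node, which forces $C$ to be a simplex and is excluded). In this situation one uses the characteristic flexibility of stacked polytopes --- their realization spaces are as large as the combinatorics allows, each newly stacked vertex ranging over a full-dimensional open region --- together with projective uniqueness of $C$, which pins down precisely which hyperplanes the vertices of $V$ must lie on. Placing these hyperplanes with rational coefficients and then solving, vertex by vertex along the tree, the resulting (affine, hence linear) incidence-and-sidedness conditions, one constructs a rational stacked polytope carrying a copy of $V$ that realizes $C$; thus $C$ is rationally realizable, contradicting \emph{(i)} and proving the theorem.

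The main obstacles are two. First, the dimension accounting in \emph{(i)}: Below's $4$-dimensional gadget and the single Adiprasito--Ziegler step must be arranged so that their combination lands \emph{exactly} in dimension $5$ while at the same time conferring projective uniqueness and preserving the quadratic obstruction --- there is essentially no slack, and an inefficiency in either ingredient raises the dimension. Second, and more serious, is the incidence-transfer at the end of \emph{(ii)}: after the leaf deletions the remaining unused vertices of $Q$ need not be removable, and their positions may be coupled to the hyperplanes on which $V$ must lie, so one must show this coupling is still solvable over $\mathbb{Q}$ --- the hardest case being that of a ``stacked sausage'', a stacked polytope whose dual tree is a path, to which the remaining configurations are reduced. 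The rest --- the tree reductions, and the passage to a single realization via the projective-image fact --- is routine.
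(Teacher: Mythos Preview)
Your approach diverges from the paper's in two essential ways, and both of your key steps have genuine gaps.

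\textbf{On (i): the dimension accounting does not close.} The Adiprasito--Ziegler machinery does not produce a projectively unique polytope one dimension up. The subdirect cone (Lemma~\ref{lem:subdc}) raises the dimension by one but yields only a projectively unique \emph{PP configuration} $(P^{\vect v},Q\cup R)$; to turn this into a projectively unique \emph{polytope} one must apply Lawrence extensions (Proposition~\ref{prp:mlwextn}), which add $\#Q+\#R$ further dimensions (Corollary~\ref{cor:wpt}). There is no ``single, carefully dimensioned instance'' of the construction that takes a non-rational $4$-polytope to a projectively unique $5$-polytope. Indeed, the paper itself notes that the method of Theorem~\ref{thm:ratprj} is ``highly ineffective'' dimensionally, and Perles' classical non-rational projectively unique example is $8$-dimensional; no $5$-dimensional such polytope is produced by the tools you invoke. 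This is precisely why the paper abandons projective uniqueness here and instead uses Below's theorem (Theorem~\ref{thm:Below}), which only guarantees that a prescribed \emph{face} is projectively rigid in every realization of a $(d+2)$-polytope --- a much weaker but dimensionally tight statement.

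\textbf{On (ii): the rationalization argument is not established.} Even granting a projectively unique $C$, your claim that being a subpolytope of a stacked polytope forces a rational realization is not proved. After your leaf-deletion step, internal vertices of the dual tree may remain, and the ``incidence-and-sidedness conditions'' you propose to solve along the tree are not in general linear over~$\mathbb{Q}$: the hyperplanes on which the vertices of $V$ must lie are determined by the combinatorics of $C$, not of the ambient stacked polytope, and there is no reason those hyperplanes can be chosen rationally while simultaneously keeping the stacked combinatorics and the subpolytope combinatorics intact. You yourself flag this as the ``more serious'' obstacle and offer only a reduction to the path case without an argument there.

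\textbf{What the paper actually does.} The obstruction is metric, not arithmetic. One takes a $3$-polytope $P$ with $\dH(P,B_1(0))$ small enough that, by Shephard's estimates (Corollary~\ref{cor:sh}), no projective image of $P$ is a subpolytope of any $\kF{6}$-stacked polytope. Below's theorem then yields a $5$-polytope $\widehat{P}$ in which a face is projectively equivalent to $P$ in \emph{every} realization. If some realization of $\widehat{P}$ were a subpolytope of a stacked $5$-polytope, then by Proposition~\ref{prp:ratsub} that face would be a subpolytope of a $\kF{6}$-stacked $3$-polytope, contradicting the metric bound. No projective uniqueness of $\widehat{P}$, and no rationality argument, is needed.
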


It remains open to decide whether every combinatorial type of $4$-dimensional polytope can be realized as the subpolytope of some stacked polytope. 

\setcounter{subsection}{20}
\subsection{Universality of projectively unique polytopes}
\subsubsection*{Point configurations, PP configurations and weak projective triples}
We recall the basic facts about projectively unique point configurations and polytope-point configurations, compare also \cite[Sec.\ 5.1 \& 5.2]{AZ12}, \cite[Sec.\ 4.8 Ex.\ 30]{Grunbaum} or \cite[Pt.\ I]{RG}.

\begin{definition}[PP configurations, Lawrence equivalence, projective uniqueness]
A \Defn{point configuration} is a finite collection $R$ of distinct points in~$\R^d$. If $H$ is an oriented hyperplane in $\R^d$, then we use $H_+$ resp.\ $H_-$ to denote the open halfspaces 
bounded by $H$. If $P$ is a polytope in $\R^d$ such that $P\cap R=\emptyset$ then the pair $(P,R)$ is a 
\Defn{polytope--point configuration}, or short \Defn{PP configuration}. 

A hyperplane $H$ is \Defn{external} to $P$ if $H\cap P$ is a face of~$P$. Two PP configurations $(P,R)$, $(P',R')$ in $\R^d$ are \Defn{Lawrence equivalent} if there is a 
bijection $\varphi$ between the vertex sets of $P$ and $P'$ and the sets $R$ and~$R'$, 
such that, if $H$ is any hyperplane  for which the closure of $H_-$ contains $P$, there exists an oriented hyperplane $H'$ for which the closure of $H'_-$ contains $P'$ and  
\[
    \varphi(\F_0(P)\cap H_-)=\F_0(P')\cap H'_-, \qquad 
    \varphi(R\cap H_+)={R'}\cap H'_+, \qquad 
    \varphi(R\cap H_-)={R'}\cap H'_-;
\]
where $\F_0(P)$ denotes the set of vertices of $P$.

A PP configuration $(P,R)$ in~$\R^d$ is \Defn{projectively unique} if for any PP configuration $(P',R')$ in~$\R^d$ 
Lawrence equivalent to it, and every bijection $\varphi$ that induces the Lawrence equivalence, there is a projective transformation $T$ that realizes $\varphi$. A point configuration $R$ is \Defn{projectively unique} if the PP configuration $(\emptyset, R)$ is projectively unique, and it is not hard to verify that a polytope $P$ is projectively unique if and only if the PP configuration $(P,\emptyset)$ is projectively unique.
\end{definition}

\begin{prp}[Lawrence extensions, cf.\ {\cite[Prp.\ 5.2]{AZ12}}, {\cite[Lem.\ 3.3.3 \& 3.3.5]{RG}}]\label{prp:mlwextn}
Let $(P,R)$ be a projectively unique PP configuration in $\R^d$. Then there exists a $(\dim P+\#R)$-dimensional polytope on $f_0(P) + 2\cdot \#R$ vertices that is projectively unique and that contains $P$ as a face.
\end{prp}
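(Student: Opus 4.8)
The plan is to obtain $\widehat P$ as an iterated \emph{Lawrence extension} of $(P,R)$, performing one extension at each point of $R$; assuming, as we may, that $P$ is full-dimensional in $\R^d$, so that $R\subset\R^d=\aff(P)$, it suffices by induction on $\#R$ to prove a single step: given a projectively unique PP configuration $(P,R)$ in $\R^d$ with $P$ full-dimensional and a point $r\in R$, one can produce a projectively unique PP configuration $(\tilde P,R\setminus\{r\})$ in $\R^{d+1}$ in which $\tilde P$ is full-dimensional, contains $P$ as a facet, and satisfies $f_0(\tilde P)=f_0(P)+2$. Iterating this $\#R$ times yields a full-dimensional polytope $\widehat P\subset\R^{d+\#R}$ with $f_0(P)+2\cdot\#R$ vertices which contains $P$ as an iterated, hence honest, face, and with $\dim\widehat P=d+\#R=\dim P+\#R$.

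For the single step, embed $\R^d=\R^d\times\{0\}$ in $\R^{d+1}$, set $a:=(r,1)$ and $b:=(r,2)$, and let $\tilde P:=\conv\bigl(\F_0(P)\cup\{a,b\}\bigr)$. The easy assertions follow by inspection: the hyperplane $\{x_{d+1}=0\}$ supports $\tilde P$ and meets it exactly in $P$ (a point of $\tilde P$ with last coordinate $0$ involves neither $a$ nor $b$), so $P$ is a facet of $\tilde P$; $a$ and $b$ are vertices --- for instance $a\notin\conv(\F_0(P)\cup\{b\})$ because $r\notin P$ --- so $\F_0(\tilde P)=\F_0(P)\sqcup\{a,b\}$; and $\tilde P$ is full-dimensional in $\R^{d+1}$, while $(\tilde P,R\setminus\{r\})$ is a PP configuration since $R\setminus\{r\}\subset\{x_{d+1}=0\}$ is disjoint from $\tilde P\cap\{x_{d+1}=0\}=P$. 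The only real point is that $(\tilde P,R\setminus\{r\})$ is projectively unique.

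So let $(Q,S)$ be a PP configuration in $\R^{d+1}$ Lawrence equivalent to $(\tilde P,R\setminus\{r\})$ via a bijection $\psi$, and write $a':=\psi(a)$, $b':=\psi(b)$, and $P':=\conv\bigl(\psi(\F_0(P))\bigr)$. Applying the Lawrence equivalence to the hyperplane $\{x_{d+1}=0\}$ --- for which $\{a,b\}$ is precisely the set of vertices of $\tilde P$ lying strictly off it, on one side, and $R\setminus\{r\}$ lies on it --- shows that $P'$ is a facet of $Q$, that its affine hull $H':=\aff(P')$ is a supporting hyperplane of $Q$ off which $a',b'$ are the only vertices of $Q$ (and these lie on one side of $H'$), and that $S\subset H'$; and a short inspection of the supporting hyperplanes of $\tilde P$ through a ridge of the facet $P$ forces the line $\aff\{a',b'\}$ to meet $H'$ in a single point $r'$. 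Identifying $H'$ with $\R^d$, I claim that $(P',S\cup\{r'\})$ is Lawrence equivalent to $(P,R)$, via $\psi$ on $\F_0(P)\cup(R\setminus\{r\})$ together with $r\mapsto r'$: the combinatorics of $\tilde P$, which the Lawrence equivalence preserves, records for every hyperplane $H$ of $\R^d$ with $P\subset\overline{H_-}$ on which side of $H$ the point $(r,0)$ --- equivalently $r$ --- lies, and it carries this datum over to the position of $r'$ relative to $P'$, while the data relating $S$ to $R\setminus\{r\}$ transfer directly. Since $(P,R)$ is projectively unique, there is a projective transformation $T$ of $\R^d=H'$ realizing this equivalence; in particular $T$ agrees with $\psi$ on $\F_0(P)\cup(R\setminus\{r\})$ and $T(r)=r'$.

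It remains to lift $T$ to a projective transformation $\Psi$ of $\R^{d+1}$ that induces $T$ on $\{x_{d+1}=0\}$ and sends $a\mapsto a'$ and $b\mapsto b'$. Such a $\Psi$ exists and is unique, because a projective transformation of $\R^{d+1}$ is determined by its restriction to a hyperplane together with the images of two further points, the sole compatibility condition being that $\aff\{a,b\}\cap\{x_{d+1}=0\}=(r,0)$ be mapped to $\aff\{a',b'\}\cap H'=r'$, which holds since $T(r)=r'$. Being projective, $\Psi$ sends supporting hyperplanes of $\tilde P$ to supporting hyperplanes of $Q$ with the incidences dictated by $\psi$; after verifying that $\Psi$ is admissible, it follows that $\Psi$ realizes the Lawrence equivalence $\psi$, so $(\tilde P,R\setminus\{r\})$ is projectively unique, which completes the step. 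The main obstacle --- and the only place where the precise geometry of the Lawrence extension really enters --- is the claim that a Lawrence equivalence of the extensions descends to one of the original PP configurations; this is essentially the content of the results of Richter--Gebert and Adiprasito--Ziegler cited in the statement.
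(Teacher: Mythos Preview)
The paper does not supply its own proof of this proposition; it simply records the statement and defers to \cite[Prp.~5.2]{AZ12} and \cite[Lem.~3.3.3 \& 3.3.5]{RG}. Your sketch is precisely the standard single-point Lawrence extension argument carried out in those references: lift one point $r\in R$ to two new vertices $a=(r,1)$, $b=(r,2)$ in one extra dimension, check that $P$ survives as a facet and that the vertex count goes up by two, and then argue projective uniqueness by showing that a Lawrence equivalence of the extended pair descends to one of $(P,R)$ and that the resulting projective map of $\R^d$ lifts uniquely to $\R^{d+1}$. So there is nothing to compare against, and your approach matches the cited sources.

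One place where your sketch is a little thin, and worth tightening if you want a self-contained argument: the assertion that $\aff\{a',b'\}$ meets $H'$ in an affine point $r'$, and that the position of $r'$ relative to the facets of $P'$ is exactly the position of $r$ relative to the facets of $P$. The first part requires ruling out that $a',b'$ lie in a line parallel to $H'$; the clean way to see both is to note that for every facet $F$ of $P$ the combinatorial type of $\tilde P$ records whether the facet of $\tilde P$ above $F$ contains both of $a,b$, only $b$, or neither, and that this trichotomy is exactly ``$r$ beneath/on/beyond $\aff F$''. Transporting this via $\psi$ forces $r'$ to be a finite point with the matching beneath/beyond pattern. You allude to this (``a short inspection of the supporting hyperplanes of $\tilde P$ through a ridge of the facet $P$''), and correctly flag it as the crux borrowed from the cited references; spelling it out is the only missing ingredient for a complete proof.
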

Here $\#R$ denotes the cardinality of $R$.
\begin{definition}[Framed PP configurations]\label{def:framep} Let $(P, R)$ denote any PP configuration in $\R^d$, and let $Q$ be any subset of $\F_0(P) \cup R$.
Let $(P',R')$ be any PP configuration in $\R^d$ Lawrence equivalent to $(P, R)$, and let $\varphi$ denote the labeled isomorphism inducing the Lawrence equivalence. 

The PP configuration $(P, R)$ is \Defn{framed} by the set $Q$ if $\varphi_{|Q}=\mathrm{id}_{|Q}$ implies $\varphi=\mathrm{id}$. Similarly, a polytope~$P$ (resp.\ a point configuration~$R$) is \Defn{framed} by a set $Q$ if $(P,\emptyset)$ (resp.\ $(\emptyset,R)$) is framed by $Q$.
\end{definition}

\begin{examples}[Some instances of framed PP configurations]\label{ex:stdet} $\mbox{}$
\begin{compactenum}[(i)]
\item If $(P,R)$ is any PP configuration, then $\F_0(P)\cup R$ frames $(P,R)$.
\item If $(P,R)$ is any PP configuration framed by a set $Q$, then every superset of $Q$ frames $(P,R)$ as well.
\item If $(P,R)$ is any projectively unique PP configuration, and $Q\subset \F_0(P) \cup R$ is a projective basis, then $Q$ frames $(P,R)$.
\item Any $d$-cube, $d\geq 3$, is framed by $2^d-1$ of its vertices, cf.\ \cite[Lem.~3.4]{AZ12}.
\end{compactenum}
\end{examples}

\begin{definition}[Weak projective triple in $\R^d$]\label{def:wpt}
A triple $(P,Q,R)$ of a polytope $P$ in $\R^d$, a subset $Q$ of $\F_0(P)$ and a point configuration $R$ in $\R^d$ is a \Defn{weak projective triple} in $\R^d$ if and only if

\begin{compactenum}[(1)]
\item $(\emptyset, Q \cup R)$ is a projectively unique point configuration,
\item $Q$ frames the polytope $P$, and
\item some subset of $R$ spans a hyperplane $H$, the \Defn{wedge hyperplane}, which does not intersect $P$.
\end{compactenum}
\end{definition}

\begin{figure}[htbf]
\centering 
  \includegraphics[width=0.82\linewidth]{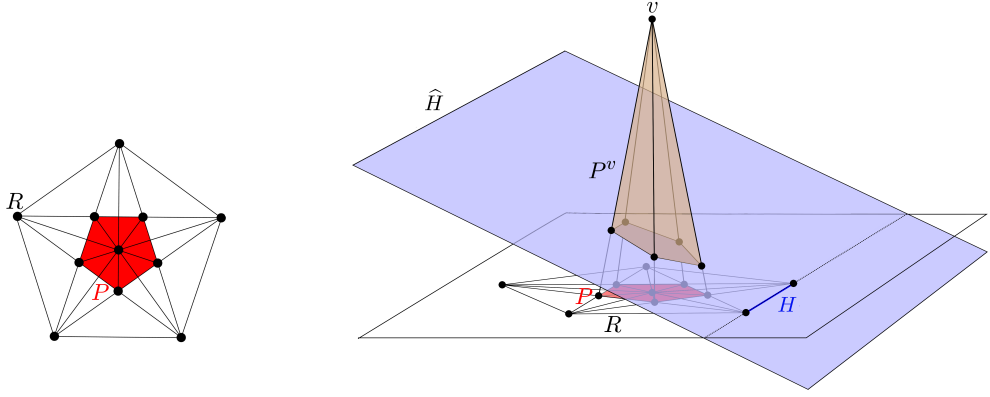} 
  \caption{The subdirect cone of the weak projective triple $(P,Q,R)$ in the special case where $Q$ coincides with the vertex set of $P$.} 
  \label{fig:subdirect}
\end{figure}

\begin{definition}[Subdirect cone]\label{def:subd}
Let $(P,Q,R)$ be a weak projective triple in $\R^d$, seen as a canonical subspace of $\R^{d+1}$. Let $H$ denote the wedge hyperplane in $\R^d$ spanned by points of $R$ with $H\cap P=\emptyset$. Let $\vect v$ denote any point not in $\R^d$, and let $\widehat{H}$ denote any hyperplane in $\R^{d+1}$ such that $\widehat{H}\cap\R^d=H$ and $\widehat{H}$ separates $\vect v$ from $P$. Consider, for every vertex $\vect p$ of $P$, the point $\vect{p}^{\vect v}=\conv\{\vect v,\,\vect p\}\cap \widehat{H}$. Denote by $P^{\vect v}$ the pyramid
\[P^{\vect v}:=\conv \Big( \vect v\cup \bigcup_{\vect p\in \F_0(P)} \vect{p}^{\vect v} \Big).\]
 The PP configuration $(P^{\vect {v}},Q \cup R)$ in $\R^{d+1}$ is a \Defn{subdirect cone} of $(P,Q,R)$.
\end{definition}

\begin{lemma}[{\cite[Lem.~5.8.]{AZ12}}]\label{lem:subdc}
For any weak projective triple $(P,Q,R)$ the subdirect cone $(P^{\vect {v}},Q \cup R)$ is a projectively unique PP configuration, and the base of the pyramid $P^{\vect {v}}$ is projectively equivalent to $P$.
\end{lemma}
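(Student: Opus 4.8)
The plan is to treat the two assertions separately, the second being routine. Central projection from the apex $\vect v$ is a projective map from the affine hyperplane $\R^d=\aff(P)$ onto the affine hyperplane $\widehat H$: since $\widehat H$ separates $\vect v$ from $P$, the segment from $\vect v$ to any point of $P$ crosses $\widehat H$, so this projection is defined on $P$, and it is injective there because $\R^d$, not containing $\vect v$, meets each line through $\vect v$ in at most one point. Thus it restricts to an admissible projective transformation on a neighbourhood of $P$, and it carries a vertex $\vect p$ of $P$ to $\conv\{\vect v,\vect p\}\cap\widehat H=\vect p^{\vect v}$, hence $P$ to the base $\conv\{\vect p^{\vect v}:\vect p\in\F_0(P)\}$ of $P^{\vect v}$. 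So the base of $P^{\vect v}$ is projectively equivalent to $P$.

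For projective uniqueness, let $(P',R')$ in $\R^{d+1}$ be Lawrence equivalent to $(P^{\vect v},Q\cup R)$ via a labelled isomorphism $\psi$; composing with projective transformations as we proceed, it suffices to reduce to $(P',R')=(P^{\vect v},Q\cup R)$ with $\psi=\mathrm{id}$. Since Lawrence equivalence preserves the combinatorial type of the polytope part, $P'$ is again a pyramid, with apex $\psi(\vect v)$ over a base $B'$ spanning a hyperplane $\widehat H'$. First I would normalize the point configuration. Observe that $P^{\vect v}$ lies strictly on one side of $\R^d$, since $\vect v$ does and each base vertex $\vect p^{\vect v}$ lies on the open segment from $\vect v$ to $\vect p\in\R^d$; consequently every $(d-1)$-flat of $\R^d$ is disjoint from $P^{\vect v}$ and hence extends to a hyperplane of $\R^{d+1}$ missing $P^{\vect v}$ altogether. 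Therefore every covector of the point configuration $Q\cup R\subseteq\R^d$ is induced by a hyperplane external to $P^{\vect v}$, so the Lawrence data of $(P^{\vect v},Q\cup R)$ records the full oriented matroid of $Q\cup R$ and $\psi|_{Q\cup R}$ is a Lawrence equivalence of point configurations. By hypothesis~(1), projective uniqueness of $Q\cup R$, we may post-compose so that $\psi$ is the identity on $Q\cup R$.

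Next I would pin down the pyramid. Applying the Lawrence condition to the base facet hyperplane $\widehat H$ of $P^{\vect v}$ forces the base hyperplane $\widehat H'$ of $P'$ to contain the wedge hyperplane $H$ (the points of $R$ spanning $H$ lie on $\widehat H$, hence on $\widehat H'$) and forces $\psi(\vect v)$ to be the unique vertex of $P'$ off $\widehat H'$. I then claim that the pair $(\psi(\vect v),\widehat H')$ agrees with $(\vect v,\widehat H)$ up to a projective transformation $g$ of $\R^{d+1}$ fixing $Q\cup R$ pointwise: the wedge hyperplane $H\subseteq\widehat H'$ anchors $\widehat H'$ to the now–fixed points of $R$ (condition~(3)), while the collinearities $\vect v,\vect p^{\vect v},\vect p$ along the edges of the pyramid — each $\vect p^{\vect v}$ being the unique intersection of the base hyperplane with the line $\aff\{\vect v,\vect p\}$, an incidence preserved by $\psi$ — pin down the rest. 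Post-composing with $g^{-1}$, we may assume $\psi(\vect v)=\vect v$ and $\widehat H'=\widehat H$, whence $\psi(\vect p^{\vect v})=\vect p^{\vect v}$ for every $\vect p\in Q$. Finally, $B'$ realizes the polytope $B$, which, being projectively equivalent to $P$, is framed by $\{\vect p^{\vect v}:\vect p\in Q\}$ (framing is a projective invariant); since $\psi$ fixes these framing vertices, hypothesis~(2) gives $\psi|_B=\mathrm{id}$, so $B'=B$ and $P'=\conv(\vect v\cup B)=P^{\vect v}$ with $\psi=\mathrm{id}$, as required.

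The main obstacle is the pinning in the third paragraph: showing that once $Q\cup R$ is in standard position, the only remaining freedom in the realized pyramid's apex and base hyperplane is a projective transformation fixing $Q\cup R$. The point configuration alone leaves a whole pencil of candidates for $\widehat H'$, and it is precisely the conjunction of the wedge-hyperplane condition~(3), the edge collinearities out to the points of $Q$, and the framing hypothesis~(2) that collapses this freedom; making that conjunction precise — and keeping track that the successive central projections and projective normalizations stay admissible on the configuration in hand — is the technical heart. By comparison, the projective equivalence of the base and the recovery of the oriented matroid of $Q\cup R$ from the external-hyperplane data of the cone are straightforward, the latter being exactly what the shape of the subdirect cone is engineered to guarantee.
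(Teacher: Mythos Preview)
The paper does not supply its own proof of this lemma; it is cited as \cite[Lem.~5.8]{AZ12} and used as a black box, so there is nothing in the present paper to compare your proposal against.

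For what it is worth, your outline follows the expected strategy and is essentially the argument one finds in \cite{AZ12}: recover the full oriented matroid of $Q\cup R$ from the external-hyperplane data of $(P^{\vect v},Q\cup R)$ (your observation that $P^{\vect v}$ lies strictly to one side of $\R^d$ is exactly what makes every covector of $Q\cup R$ visible to an external hyperplane), invoke condition~(1) to put $Q\cup R$ in standard position, spend the residual projective freedom of $\R^{d+1}$ fixing $\R^d$ pointwise to normalize the apex and the base hyperplane, and then use condition~(2) to nail down the base. You rightly flag the normalization of the pair $(\psi(\vect v),\widehat H')$ as the technical crux and correctly isolate why condition~(3) and the apex--base-vertex--$Q$ collinearities are what collapse the remaining freedom; what you have left unwritten is the concrete verification that the stabilizer of $\R^d$ in $\mathrm{PGL}_{d+2}$ acts transitively on admissible such pairs, which is a straightforward parameter count once set up carefully. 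Your argument for the projective equivalence of the base via central projection from $\vect v$ is clean and complete.
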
	

Combining Lemma~\ref{lem:subdc} and Proposition~\ref{prp:mlwextn} gives:

\begin{cor}\label{cor:wpt}
If $(P,Q,R)$ is a weak projective triple, there exists a projectively unique polytope of dimension $\dim P + \#Q + \#R +1$ on $f_0(P)+2\cdot(\#Q + \#R)+1$ vertices that contains a face projectively equivalent to $P$. \qed
\end{cor}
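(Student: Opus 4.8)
The plan is to obtain Corollary~\ref{cor:wpt} by stacking Lemma~\ref{lem:subdc} on top of Proposition~\ref{prp:mlwextn}, keeping careful track of the numerical parameters. First I would take the weak projective triple $(P,Q,R)$ in $\R^d$ and apply the subdirect cone construction of Definition~\ref{def:subd}: choosing an apex $\vect v \notin \R^d$ and an appropriate hyperplane $\widehat H$ in $\R^{d+1}$ separating $\vect v$ from $P$, this produces the pyramid $P^{\vect v}$ over a base projectively equivalent to $P$, together with the PP configuration $(P^{\vect v}, Q \cup R)$ in $\R^{d+1}$. By Lemma~\ref{lem:subdc} this PP configuration is projectively unique.

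Next I would feed this PP configuration into Proposition~\ref{prp:mlwextn}. Its hypotheses are met exactly because $(P^{\vect v}, Q \cup R)$ is a projectively unique PP configuration in $\R^{d+1}$, so the proposition yields a polytope $\widehat P$ which is projectively unique, has dimension $\dim P^{\vect v} + \#(Q\cup R)$, has $f_0(P^{\vect v}) + 2\cdot\#(Q\cup R)$ vertices, and contains $P^{\vect v}$ as a face. Since any face of a face is a face, and the base of $P^{\vect v}$ is a face of $P^{\vect v}$ that is projectively equivalent to $P$, the polytope $\widehat P$ contains a face projectively equivalent to $P$, as desired.

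It then remains to compute the parameters. The pyramid $P^{\vect v}$ has one vertex more than $P$, namely the apex $\vect v$ together with the images $\vect p^{\vect v}$ of the vertices of $P$, so $f_0(P^{\vect v}) = f_0(P)+1$ and $\dim P^{\vect v} = \dim P + 1$. Writing $\#(Q\cup R) = \#Q + \#R$ (these sets are disjoint, $Q\subset \F_0(P)$ and $R$ being a point configuration disjoint from $P$), the dimension of $\widehat P$ is $(\dim P + 1) + \#Q + \#R = \dim P + \#Q + \#R + 1$, and its number of vertices is $(f_0(P)+1) + 2(\#Q+\#R) = f_0(P) + 2\cdot(\#Q + \#R) + 1$. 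These are exactly the claimed values.

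The only point that needs a word of care — and thus the main thing to check rather than a genuine obstacle — is that $Q$ and $R$ are disjoint, so that $\#(Q\cup R)=\#Q+\#R$: this is immediate since $Q$ consists of vertices of $P$ while $R$ is disjoint from $P$ by the definition of a PP configuration. With that observed, the corollary follows purely formally by concatenating the two cited results, and no further argument is needed; hence the \qed already placed after the statement.
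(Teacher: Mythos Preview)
Your proposal is correct and is exactly the argument the paper intends: the corollary is stated with an immediate \qed\ precisely because it follows by applying Lemma~\ref{lem:subdc} to obtain the projectively unique PP configuration $(P^{\vect v},Q\cup R)$ and then feeding this into Proposition~\ref{prp:mlwextn}, with the dimension and vertex counts falling out as you compute. The only imprecision is your justification that $Q\cap R=\emptyset$ ``by the definition of a PP configuration'': Definition~\ref{def:wpt} does not literally declare $(P,R)$ to be a PP configuration, so this disjointness is really an implicit standing assumption (and holds in every application in the paper) rather than a formal consequence of the stated definitions.
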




\subsubsection*{Constructions for projectively unique point configurations and the proof of Theorem~\ref{thm:ratprj}}

Let $P$ be any algebraic polytope. Our goal for this section is to find a weak projective triple that contains~$P$. Applying Corollary~\ref{cor:wpt} then finishes the proof of Theorem~\ref{thm:ratprj}. The main step towards that goal is to embed $\F_0(P)$ into a projectively unique point configuration. In the construction, we will use the following straightforward observation repeatedly.

\begin{lemma}\label{lem:union}
Let $R$ be a projectively unique point configuration, let $Q\subseteq R$ and let $R'\supseteq Q$ be a point configuration framed by $Q$. Then $R\cup R'$ is a projectively unique point configuration.\qed
\end{lemma}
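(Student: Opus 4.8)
The plan is to start from an arbitrary Lawrence equivalence of $R\cup R'$ onto another point configuration and to trivialize it in two stages. In the first stage I restrict attention to $R$ and use that $R$ is projectively unique to bring the equivalence to the identity on $R$, and hence on $Q\subseteq R$; in the second stage I restrict to $R'$ and use that $R'$ is framed by $Q$ to conclude that the equivalence is already the identity on $R'$, hence on all of $R\cup R'$. So, write $S:=R\cup R'$, let $\widehat S$ be a point configuration in $\R^d$ that is Lawrence equivalent to $S$, and let $\varphi\colon S\to\widehat S$ be a bijection inducing the equivalence; the goal is to produce a projective transformation realizing $\varphi$.

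The one routine fact I would record first is that the restriction of a Lawrence equivalence to a subconfiguration is again a Lawrence equivalence (this is the ``straightforward'' observation alluded to). If $H$ is an oriented hyperplane and $H'$ is a hyperplane witnessing the equivalence $\varphi$ on $S$, then for any $R_0\subseteq S$ the same $H'$ witnesses it for $\varphi|_{R_0}$, since $\varphi$ is a bijection and $\varphi(R_0)\subseteq\widehat S$, so that $\varphi(R_0\cap H_\pm)=\varphi(S\cap H_\pm)\cap\varphi(R_0)=(\widehat S\cap H'_\pm)\cap\varphi(R_0)=\varphi(R_0)\cap H'_\pm$. In the same way, every admissible projective transformation $T$ induces a Lawrence equivalence from a point configuration $X$ onto $T(X)$ (it maps oriented hyperplanes to oriented hyperplanes respecting their two open sides), and compositions of Lawrence equivalences are Lawrence equivalences.

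Granting this, the argument is short. Since $\varphi|_R$ is a Lawrence equivalence of $R$ onto $\varphi(R)\subseteq\widehat S$ and $R$ is projectively unique, there is a projective transformation $T$ realizing $\varphi|_R$, i.e.\ $T|_R=\varphi|_R$. Then $\varphi_1:=T^{-1}\circ\varphi$ is a Lawrence equivalence of $S$ onto $T^{-1}(\widehat S)$ with $\varphi_1|_R=\mathrm{id}_R$, and therefore $\varphi_1|_Q=\mathrm{id}_Q$ because $Q\subseteq R$. Now $\varphi_1|_{R'}$ is a Lawrence equivalence of $R'$ onto its image that restricts to the identity on $Q$; as $R'$ is framed by $Q$, this forces $\varphi_1|_{R'}=\mathrm{id}_{R'}$. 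Hence $\varphi_1$ is the identity on $R\cup R'=S$, which is to say $\varphi=T|_S$, so $T$ realizes $\varphi$ and $S$ is projectively unique.

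I expect the only genuinely delicate point to be the familiar one that projective transformations of $\R^d$ are not everywhere defined: one has to know that the transformation $T$ supplied by the projective uniqueness of $R$ --- and hence $T^{-1}$ --- may be chosen admissible on the finitely many points of $S$ and $\widehat S$, so that the passage from $\widehat S$ to $T^{-1}(\widehat S)$ is legitimate and preserves Lawrence equivalence. This is handled exactly as in \cite{AZ12} and \cite{RG} and is invisible in the combinatorics; everything else is a direct unwinding of the definitions of Lawrence equivalence and of being framed.
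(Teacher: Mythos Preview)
Your proof is correct and follows exactly the natural two-step argument the paper has in mind; note that the paper gives no proof at all (the lemma is tagged with \qed\ and introduced as a ``straightforward observation''), so you have simply spelled out the details the authors omitted. Your caveat about admissibility of $T^{-1}$ on the finitely many points involved is the only genuine subtlety, and it is indeed handled as in \cite{AZ12,RG}.
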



\begin{prp}\label{prp:Qdm}
The point configuration $\Q^d:=\{v\in \Z^d\subset \R^d : ||v||_\infty\le  1\}$
is projectively unique for every $d\ge3$.
\end{prp}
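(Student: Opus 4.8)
The plan is to show that $\Q^d$ is projectively unique by induction on $d$, using Lemma~\ref{lem:union} to assemble it out of lower-dimensional projectively unique pieces that overlap in framing sets. The point configuration $\Q^d = \{-1,0,1\}^d$ is just the $d$-cube $[-1,1]^d$ together with all edge midpoints, face centers of all dimensions, and the origin; equivalently, it is the vertex set of the cube together with all barycenters of its faces. The base case $d=3$ has to be checked by hand: $\Q^3$ consists of $27$ points, and one must verify that once a projective basis is pinned down, every other point is forced. The natural way to do this is to observe that the $8$ cube vertices of $\Q^3$ together with enough of the remaining points fix a projective frame, and then that every midpoint, face center, and the origin is the unique intersection point of lines spanned by already-determined points (e.g.\ the center of a square $2$-face is the intersection of its two diagonals, an edge midpoint is cut out by the two space-diagonal-type lines through it, and the origin is the intersection of the main diagonals of the cube). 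This is a finite incidence computation.

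For the inductive step, write $\Q^d = A \cup B$ where $A = \Q^{d-1}\times\{-1,0\}$ and $B = \Q^{d-1}\times\{0,1\}$ — i.e.\ two "slabs" of three consecutive $\Z^{d-1}$-layers, overlapping in the middle layer $\Q^{d-1}\times\{0\}$. Wait: more carefully, I would take $A = \Q^{d-1}\times\{-1,0\}$ to mean all points of $\Q^d$ with last coordinate in $\{-1,0\}$, and similarly $B$ with last coordinate in $\{0,1\}$; then $A\cup B = \Q^d$ and $A\cap B = \Q^{d-1}\times\{0\}$, which is a copy of $\Q^{d-1}$. Each of $A$ and $B$ is projectively equivalent (by a projective transformation taking one bounding hyperplane to infinity, or by an affine shear) to a "$\Q^{d-1}$-prism with its middle cross-section", whose projective uniqueness should follow from the projective uniqueness of $\Q^{d-1}$ plus the projective uniqueness of the configuration transverse to the layers — here one uses that three equally spaced parallel hyperplanes with a transversal form a projectively unique configuration (the classical fact underlying the midpoint/"von Staudt" constructions). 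To apply Lemma~\ref{lem:union} I need $\Q^{d-1}\times\{0\}$ to frame both $A$ and $B$: this is where Examples~\ref{ex:stdet} and the fact that the cube is framed by all-but-one of its vertices come in, since once the middle layer is fixed, the outer layers of $A$ (resp.\ $B$) are determined by the prism structure.

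The main obstacle I expect is the second ingredient of the inductive step: proving that each slab $A$ (the "three-layer $\Q^{d-1}$ prism") is itself projectively unique, and that the middle layer frames it. Projective uniqueness of $\Q^{d-1}$ controls the combinatorics within each of the three layers, but one still has to argue that the vertical/prism direction is rigid — that given the bottom and middle layers in position, the top layer is forced, and symmetrically. The clean way is: a point $q$ in the top layer lies on the line through the corresponding point of the bottom layer and the corresponding point of the middle layer, and these three are in the harmonic/equally-spaced position that is projectively determined once we also know where the "hyperplane at infinity" (or a fourth reference hyperplane) sits — and that reference data is itself pinned down by points already in the configuration, because $\Q^d$ contains enough collinear triples in every axis direction to propagate the scaling. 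Making this propagation argument precise — specifying exactly which subconfiguration plays the role of the frame $Q$ in Lemma~\ref{lem:union}, and checking the framing condition rather than just projective uniqueness — is the technical heart of the proof; everything else is bookkeeping with Lemma~\ref{lem:union} and the examples of framed configurations.
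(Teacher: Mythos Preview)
Your decomposition into two-layer slabs does not work as stated, and the gap is structural rather than technical. You want the overlap $Q=\Q^{d-1}\times\{0\}$ to frame each slab $A=\Q^{d-1}\times\{-1,0\}$ and $B=\Q^{d-1}\times\{0,1\}$, but $Q$ lies entirely in the hyperplane $x_d=0$. The affine map $(x,x_d)\mapsto(x,\lambda x_d)$ fixes every point of $Q$ and sends $B$ to the Lawrence-equivalent configuration $\Q^{d-1}\times\{0,\lambda\}$; for $\lambda\neq 1$ the induced bijection $\varphi$ restricts to the identity on $Q$ but is not the identity on $B$. So $Q$ does not frame $B$, and Lemma~\ref{lem:union} cannot be applied with this overlap. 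The later paragraph in which you invoke ``three points in harmonic position on each vertical line'' silently switches to a different claim---that all of $A$ (two layers) frames $\Q^d$ (three layers)---which is plausible, but then you are left having to prove that the two-layer prism $A$ is itself projectively unique, and you give no argument for that; the difficulty is precisely that between two layers there are no vertical collinear triples to propagate the cross-ratio.

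The paper avoids this trap by decomposing $\Q^d$ not into horizontal slabs but via the facets of $[-1,1]^d$ incident to a fixed vertex $v_0$: each such facet carries a copy of $\Q^{d-1}$ (projectively unique by induction), and the key point is that all of these pieces are glued together through a genuinely full-dimensional projective basis $B=\{v_0,v_1,\dots,v_d,o\}$ consisting of $v_0$, its $d$ neighbours, and the origin. Because $B$ already spans $\R^d$ and is a projective basis, fixing $B$ leaves no residual freedom of the kind your vertical scaling exploits, and Lemma~\ref{lem:union} applies directly. Your base case $d=3$ is fine and matches the paper's; it is only the inductive decomposition that needs to be replaced.
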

\begin{proof}
The proof is by induction on $d$. We start proving that $\Q^3$ is projectively unique. This implies that $\Q^d$ is projectively unique for any $d\geq 3$.

\medskip
\noindent \textbf{$\Q^3$ is projectively unique:} To see that $\Q^3$ is projectively unique, we start with the folklore observation that the points $(\pm 1, \pm 1, \pm 1)$, together with the origin $(0,0,0)$, form a projectively unique configuration $W\subsetneq \Q^3$ (cf.\ Figure~\ref{fig:Q31_1}). Furthermore, we claim $W$ frames $\Q^3$, thereby proving that $\Q^3$ is projectively unique since $W$ is projectively unique.

\begin{figure}[htpb]
\centering
\begin{subfigure}[t]{.3\linewidth}
\centering
\quad\includegraphics[width=.7\linewidth]{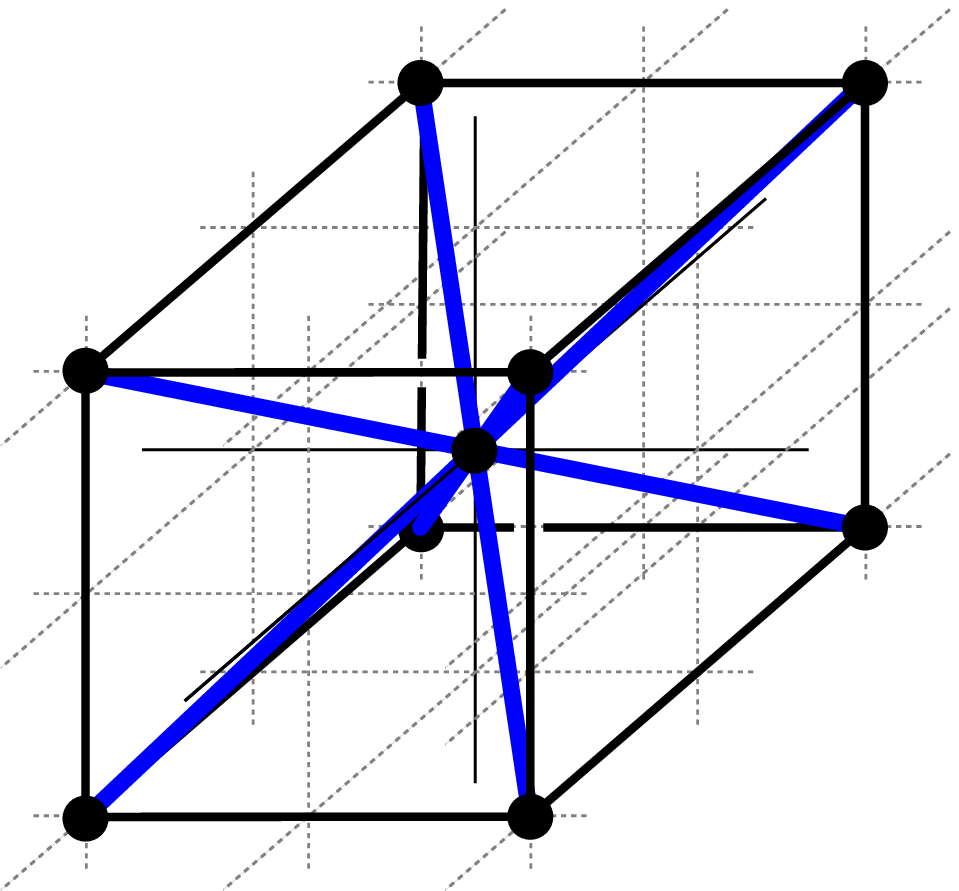}
\caption{$W$}\label{fig:Q31_1}
\end{subfigure}\qquad
\begin{subfigure}[t]{.3\linewidth}
\centering
\includegraphics[width=.7\linewidth]{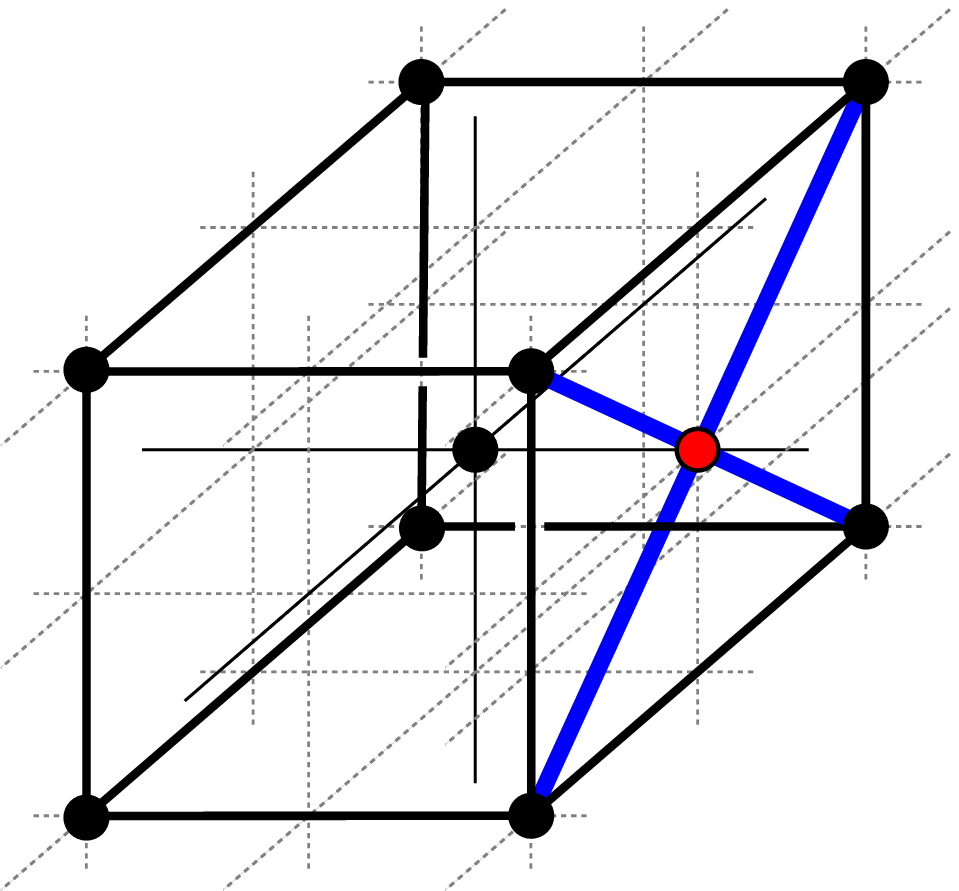}
\caption{Center points of facets.}\label{fig:Q31_2}
\end{subfigure}\qquad
\begin{subfigure}[t]{.3\linewidth}
\centering
\includegraphics[width=.7\linewidth]{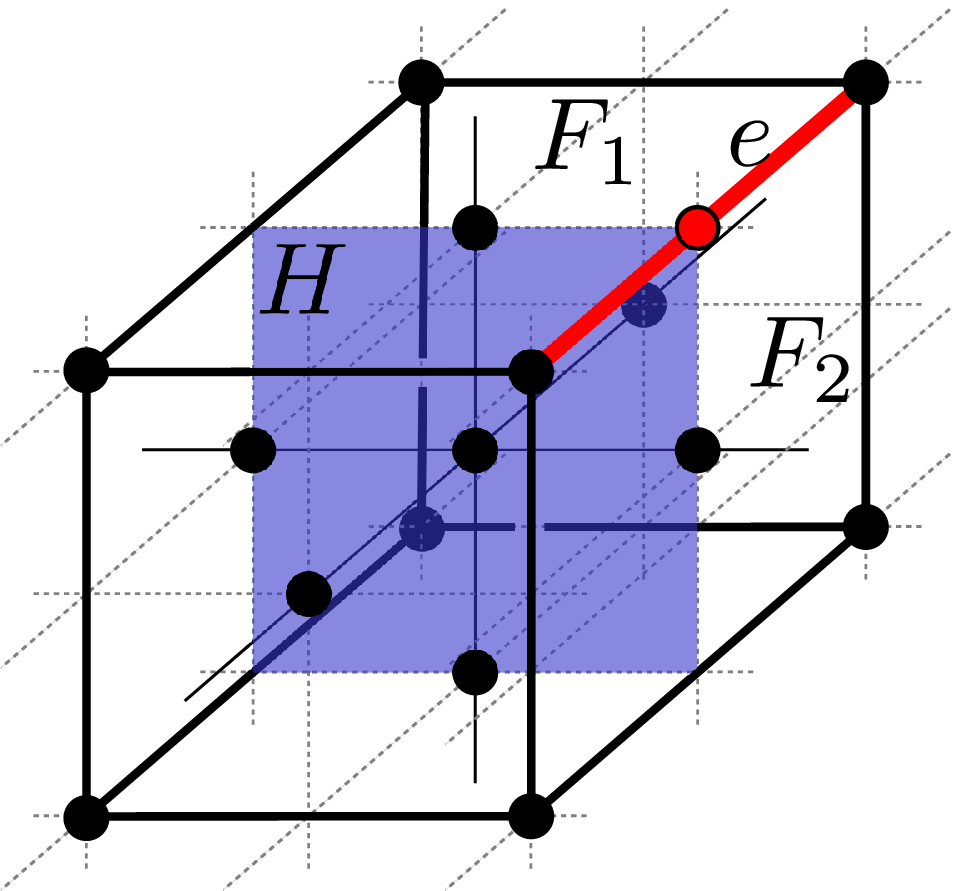}
\caption{Center points of edges.}\label{fig:Q31_3}
\end{subfigure}
\caption{Showing that $\Q^3$ is projectively unique.}\label{fig:Q31}
\end{figure}

To see this, notice that the point $(1,0,0)$ of $\Q^3\setminus W$ is determined as the intersection of the lines $\aff\{(+1,+1,+1),\, (+1,-1,-1)\}$ and $\aff\{(+1,+1,-1),\, (+1,-1,+1)\}$, which are spanned by points of~$W$. Similarly, all points that arise as coordinate permutations and/or sign changes from $(+1,0,0)$ are determined this way. Geometrically, these are the center points of the facets of the cube $[-1,1]^3=\conv W=\conv \Q^3$ (cf.~Figure~\ref{fig:Q31_2}). 

The remaining lattice points of $\Q^3$ coincide with the midpoints of the edges of said cube. To determine them, let $e$ be any edge of $\Q^3$ and let $F_1$ and $F_2$ be the facets of $\Q^3$ incident to that edge. Finally, let $H$ be the hyperplane spanned by the center point of $\Q^3$ and the center points of $F_1$ and $F_2$. The midpoint of~$e$ is the unique point of intersection of $e$ and $H$ (cf.\ Figure~\ref{fig:Q31_3}).

\medskip
\noindent \textbf{$\Q^d$ is projectively unique:} For $d\geq 4$, consider the projective basis $B$ of $\R^d$ consisting of the vertex $v_0:=(+1,+1,\dots,+1)$ of $[-1,1]^d$, together with the neighboring vertices $v_1:=(-1,+1,\dots,+1)$, $v_2:=(+1,-1,\dots,+1)$, $\dots$, $v_d:=(+1,+1,\dots,-1)$ and the origin $o:=(0,\dots,0)$ (cf.\ Figure~\ref{fig:Qd1_1}). We will see that once the coordinates of the elements in $B$ are fixed, then the coordinates of all the remaining lattice points of $\Q^d$ can be determined uniquely.

\begin{figure}[htpb]
\centering
\begin{subfigure}[t]{.3\linewidth}
\centering
\includegraphics[width=.7\linewidth]{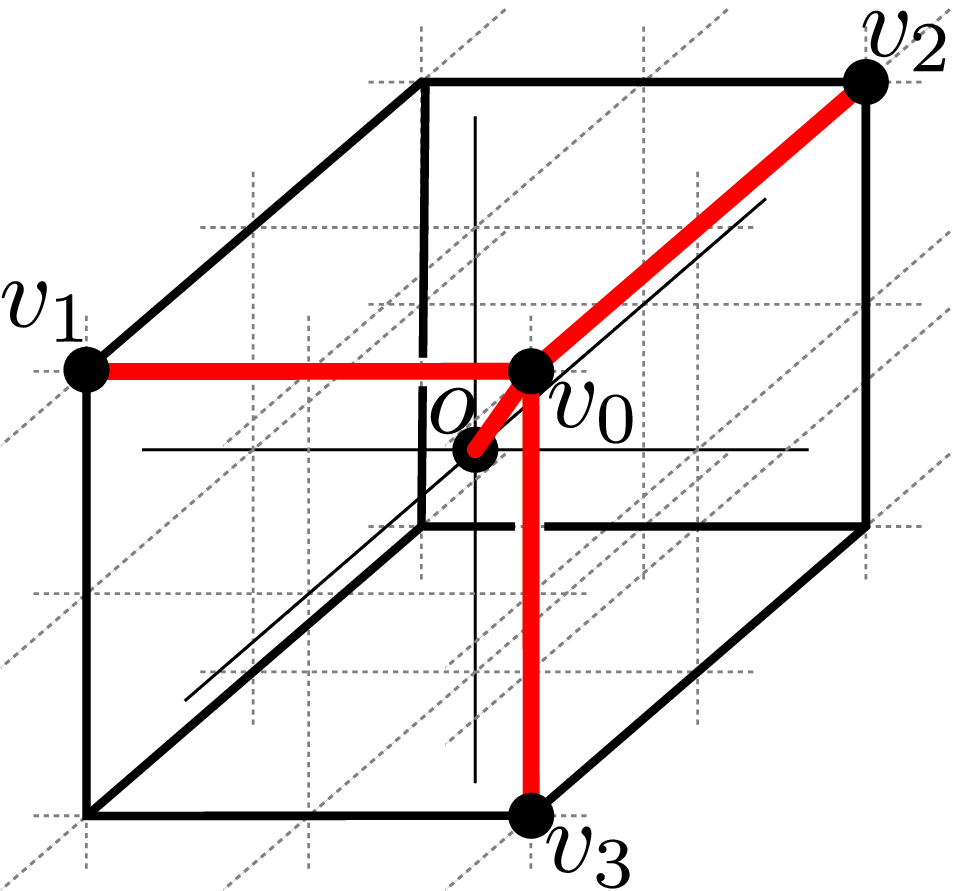}
\caption{$B$}\label{fig:Qd1_1}
\end{subfigure}\qquad
\begin{subfigure}[t]{.3\linewidth}
\centering
\includegraphics[width=.7\linewidth]{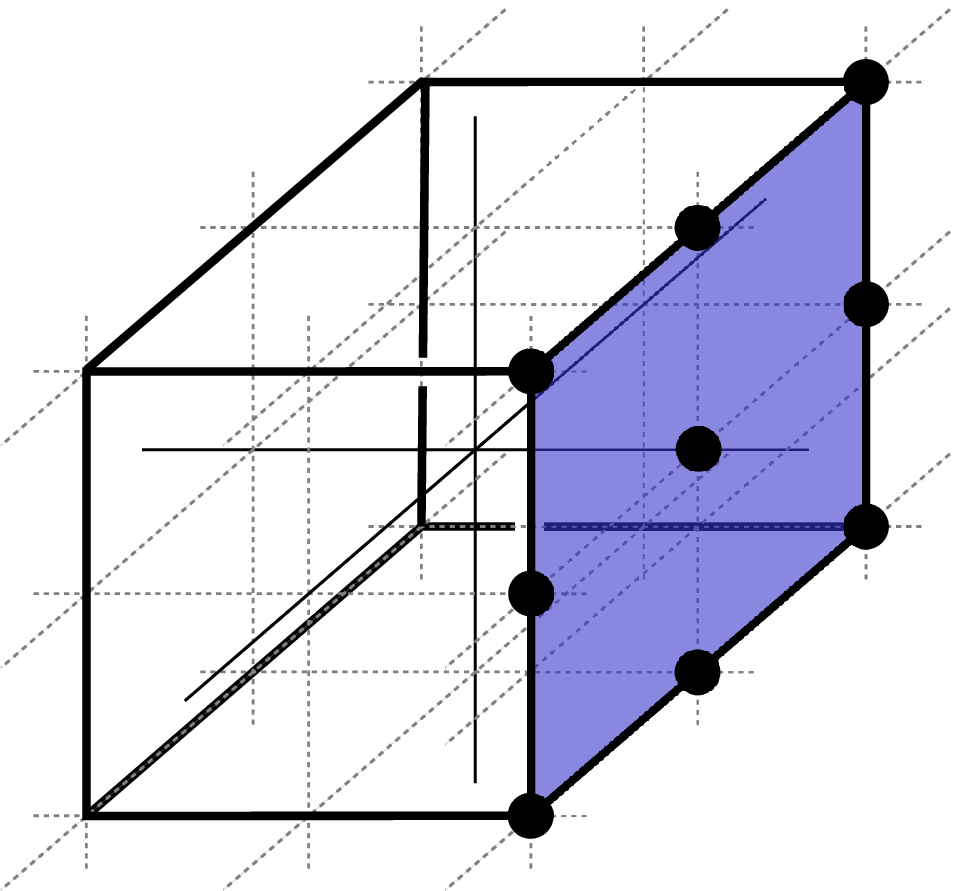}
\caption{$R_1$}\label{fig:Qd1_2}
\end{subfigure}\qquad
\begin{subfigure}[t]{.3\linewidth}
\centering
\includegraphics[width=.7\linewidth]{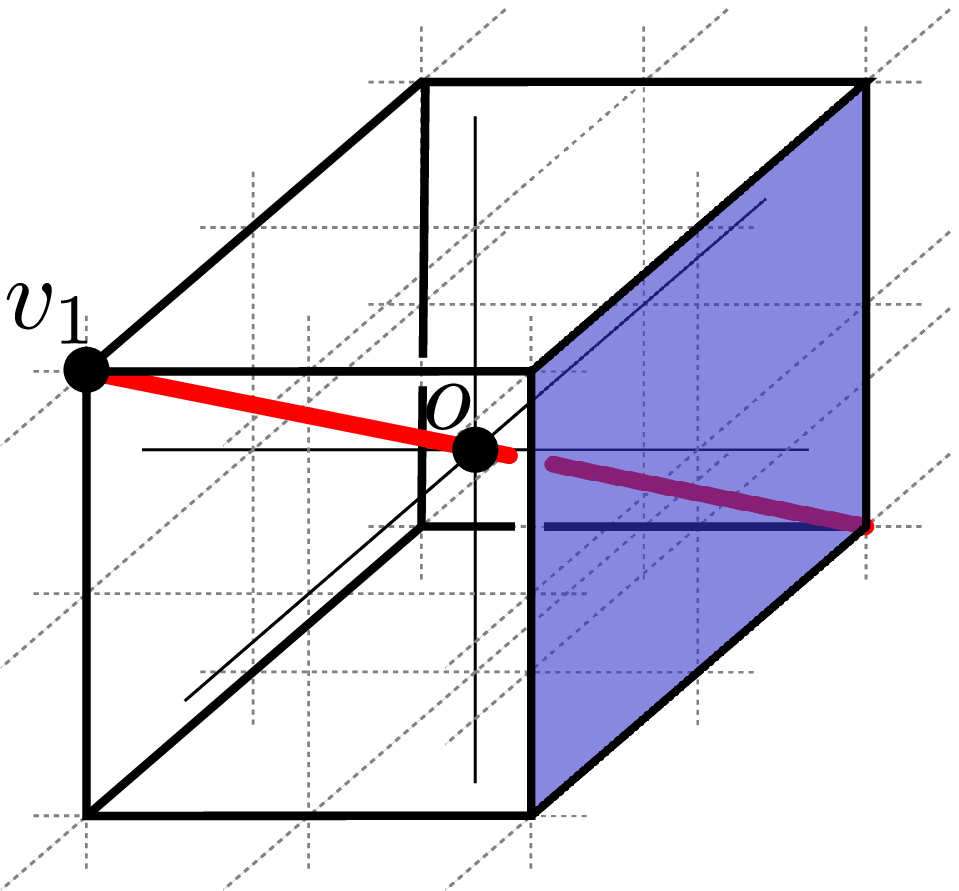}
\caption{$\aff\{o,v_1\}\cap\aff\{R_1\}$}\label{fig:Qd1_3}
\end{subfigure}\qquad
\caption{Scheme for showing that $\Q^d$ is projectively unique. (The picture displays $\Q^3$ for the sake of clarity, but the proof starts with $d\geq4$.)}\label{fig:Qd1}
\end{figure}

Consider the set of points of $\Q^d$ lying in a common facet of $[-1,1]^d$ that is incident to $v_0$; for example, $R_1:=\Q^d\cap\aff\{v_0,v_2,\dots,v_d\}$ (cf.\ Figure~\ref{fig:Qd1_2}). Observe that $R_1$ is just an affine embedding of $\Q^{d-1}$ into~$\R^d$. As such, $R_1$ is projectively unique, and thus it is determined uniquely if a projective basis for its span is fixed.

Clearly, the points $v_0,v_2,\dots,v_d$ of $B$ form an affine basis for the affine span of $R_1$. Furthermore, the coordinates of the point $w=(+1,-1,\dots,-1)$, are fixed by $B$. Indeed, $w$ is the the point of intersection of the line $\aff\{o,v_1\}$ with the hyperplane $\aff\{R_1\}$ (cf.\ Figure~\ref{fig:Qd1_3}). To sum up, we have that
\begin{compactitem}[$\circ$]
\item the points $v_0,v_2,\dots,v_d, w$ are determined uniquely from the points of $B$,
\item the points $v_0,v_2,\dots,v_d, w$ are elements of $R_1$, and
\item the points $v_0,v_2,\dots,v_d, w$ form a projective basis for the span of $R_1$. 
\end{compactitem}
Consequently, $R_1\cup B$ is a projectively unique point configuration, since $B$ is projectively unique. We can repeat this argumentation for all point configurations \[R_i:=\Q^d\cap\aff\left(\{v_0,v_1, v_2,\dots,v_d\} \setminus \{v_i\}\right),\ i\in \{1,\dots, d\}.\] In particular, \[\widetilde{\Q}^d=B\cup\bigcup_{i\in \{1,\dots, d\}} R_i\]
is projectively unique. Moreover, since the last vertex of a cube of dimension $d\geq 3$ is determined by the remaining ones by (cf.\ \cite[Lem~3.4]{AZ12}, compare also Example~\ref{ex:stdet}(iv)), the configuration $\widetilde{\Q}^d\cup \{-v_0\}$ is projectively unique as well. By symmetry,

\[-\widetilde{\Q}^d\cup \{v_0\}=-B\cup\bigcup_{i\in \{1,\dots, d\}} -R_i \cup \{v_0\}\]

\noindent is also projectively unique.
Clearly, $\widetilde{\Q}^d\cup \{-v_0\}$ and $-\widetilde{\Q}^d\cup \{v_0\}$ intersect along a projective basis: for instance, the set $B$ lies in both $-\widetilde{\Q}^d\cup \{v_0\}$ and $\widetilde{\Q}^d\cup \{-v_0\}$ and forms a projective basis as desired. Thus, the point configuration $\Q^d=\widetilde{\Q}^d\cup \{-v_0\} \cup -\widetilde{\Q}^d\cup \{v_0\}$ is projectively unique.
\end{proof}

\paragraph*{Embedding vertex sets of algebraic polytopes}

We start with a point configuration $\PROJ{\vect p}$ that shows that it is enough to fix each coordinate of a point to frame it. 

\begin{lemma}\label{lem:cube}
For each point $\vect p$ in the positive orthant $\R^d_+$ of $\R^d,\ d\ge 3$, there is a point configuration $\PROJ{\vect p}\in \R^d$ that contains $\vect p=(p_1,\dots,p_d)$ and is framed by the points \[L(\vect p):=\left\{\veczero,p_1 \ve_1,\dots,p_d \ve_d, \tfrac{p_1\ve_1}{2} ,\dots,\tfrac{p_d\ve_d}{2} \right\}.\]
\end{lemma}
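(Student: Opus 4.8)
The goal is: given a point $\vect p=(p_1,\dots,p_d)$ in the positive orthant, build a point configuration $\PROJ{\vect p}$ containing $\vect p$ together with the axis points $p_i\ve_i$ and their halves $\tfrac{p_i\ve_i}{2}$, such that fixing the positions of $\veczero$, the $p_i\ve_i$, and the $\tfrac{p_i\ve_i}{2}$ forces every other point of $\PROJ{\vect p}$ — in particular $\vect p$ itself — to be fixed.

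The idea I would pursue is to encode the coordinates of $\vect p$ by a projective construction that reads off each coordinate $p_i$ from the already-fixed collinear triple $\{\veczero,\tfrac{p_i\ve_i}{2},p_i\ve_i\}$ on the $i$-th coordinate axis, and then reconstructs $\vect p$ as an intersection of flats spanned by these axis data. Concretely: the three points $\veczero$, $\tfrac{p_i}{2}\ve_i$, $p_i\ve_i$ are in convex position on a line with a fixed cross-ratio (the midpoint relation $2\cdot\tfrac{p_i}{2}\ve_i = p_i\ve_i$ is a projective incidence once a fourth point — the point at infinity of that line, or an auxiliary finite point — is pinned down). So the first step is to include in $\PROJ{\vect p}$ enough auxiliary points to make "$\tfrac{p_i}{2}\ve_i$ is the midpoint of $\veczero$ and $p_i\ve_i$" a consequence of incidences among framed points; the standard trick is to add a copy of a projectively unique "midpoint gadget" (a complete quadrilateral) in a generic plane through the $i$-th axis, whose outer vertices are pinned and which therefore pins the harmonic conjugate / midpoint. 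Since $d\ge 3$ there is room for these gadgets; one must check they can be placed so their extra vertices don't collide with each other or with the required points, which is a genericity argument.

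Once each axis carries a rigidly determined scale, the point $\vect p$ is recovered as $\bigcap_{i=1}^d H_i$, where $H_i$ is the hyperplane through $p_i\ve_i$ parallel to the coordinate hyperplane $\{x_i=0\}$ — equivalently, the hyperplane spanned by $p_i\ve_i$ and the points at infinity of all axes $\ve_j$, $j\ne i$. To make "parallel to $\{x_i=0\}$" a projective incidence, I would again pin the relevant points at infinity (or finite surrogate points realizing the same directions) as part of the framing data inside $\PROJ{\vect p}$; these directions are the same for all $\vect p$, so they can be fixed once and for all by including, say, a fixed projectively unique configuration spanning the hyperplane at infinity's trace — or, staying affine, by including the points $\ve_j$ themselves and a small rigid gadget forcing the direction $p_i\ve_i \to \vect p$ to be that of $\sum_{j\ne i}\ve_j$-complement. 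Then $\vect p$ is the unique common point of the $H_i$, hence framed. Finally $\PROJ{\vect p}$ is declared to be the union of $\vect p$, the points $p_i\ve_i$, $\tfrac{p_i}{2}\ve_i$, $\veczero$, and all the auxiliary gadget points; by Lemma~\ref{lem:union}-style bookkeeping (or directly from the chain of forced incidences) it is framed by $L(\vect p)$.

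The main obstacle I anticipate is \emph{not} the reconstruction of $\vect p$ from rigid axes — that is routine projective incidence geometry — but rather arranging the auxiliary "doubling"/midpoint gadgets on the several axes simultaneously without unwanted coincidences, and making sure the framing set is exactly $L(\vect p)$ and no larger (the lemma insists on only $2d+1$ framing points, so the gadget vertices must themselves be forced, not assumed fixed). Handling the boundary of the orthant — coordinates $p_i=0$ — is a minor degenerate case one either excludes (the statement says positive orthant) or treats separately. I would therefore spend most of the write-up on the explicit gadget: exhibiting, in a fixed generic $2$-plane containing the $i$-th axis, a projectively unique quadrilateral configuration whose four "corner" points are determined by $\{\veczero, p_i\ve_i\}$ plus two global fixed points, and which in turn determines $\tfrac{p_i}{2}\ve_i$ — so that, run in reverse, fixing $\tfrac{p_i}{2}\ve_i$ (as $L(\vect p)$ does) together with $\veczero$ forces $p_i\ve_i$, and thence $\vect p$.
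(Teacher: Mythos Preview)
Your plan is workable in principle but far more laborious than the paper's, and it has two slips worth flagging.

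The paper's argument is essentially one line once Proposition~\ref{prp:Qdm} is available. The configuration $\Q^d=\{-1,0,1\}^d$ is projectively unique; translate it by $\mathbf{1}$ and apply the linear map $\tfrac{1}{2}\mathrm{D}$, where $\mathrm{D}=\mathrm{diag}(p_1,\dots,p_d)$. The image $\PROJ{\vect p}:=\tfrac{1}{2}\mathrm{D}(\Q^d+\mathbf{1})$ is still projectively unique, contains $\vect p$ (the image of $(2,\dots,2)$) and all of $L(\vect p)$ (the images of $\veczero,\ve_i,2\ve_i$). Now $L(\vect p)$ fixes three points on each coordinate axis, hence fixes each axis pointwise and in particular its direction at infinity; combined with the affine basis $\{\veczero,p_1\ve_1,\dots,p_d\ve_d\}$ this forces any projective transformation fixing $L(\vect p)$ to be the identity. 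By Example~\ref{ex:stdet}(iii), $L(\vect p)$ therefore frames the whole configuration. No gadgets, no genericity bookkeeping.

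Two issues with your plan as written. First, you propose to ``force'' $p_i \ve_i$ from $\veczero$ and $\tfrac{p_i}{2}\ve_i$, and to enforce the midpoint relation among them --- but all three points are already in $L(\vect p)$, hence fixed by hypothesis; nothing needs to be derived among them. What the triple $\{\veczero,\tfrac{p_i}{2}\ve_i,p_i\ve_i\}$ actually buys you is the direction of the $i$-th axis (its point at infinity), and that is indeed the useful content. Second, a quadrilateral gadget in a \emph{generic} $2$-plane through the $i$-th axis cannot be anchored: such a plane meets $L(\vect p)$ only in three collinear points on that axis, which determine nothing off the line. You would have to work in the coordinate planes $\aff\{\ve_i,\ve_j\}$, where five points of $L(\vect p)$ live. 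Carried out there, your construction would essentially rebuild the stretched $3^d$ grid point by point and re-derive Proposition~\ref{prp:Qdm} in the process --- correct, but redundant once that proposition is on the shelf.
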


\begin{proof}
We denote by $\Q^d+\mathbf{1}$ the translation of $\Q^d$ by the all-ones vector. Moreover, let $\mathrm{D}=\mathrm{D}[p_1,\dots,p_d]$ denote the diagonal matrix with diagonal entries $p_1,\dots,p_d$. Notice that $\PROJ{\vect p}:=\frac{\mathrm{D}}{2}(\Q^d+\mathbf{1})$ is projectively unique (by Proposition~\ref{prp:Qdm}), contains $\vect p$ and the set $L(\vect p)$. Since every projective transformation fixing $L(\vect p)$ is the identity, the subset $L(\vect p)$ frames $\PROJ{\vect p}$.
\end{proof}

Finally, we only need to frame each coordinate of the point. The idea is to realize the defining polynomial of any real algebraic number in a \Defn{functional arrangement} (cf.\ \cite[Def.\ 9.6]{Kapovich}), which conversely determines the real algebraic number. 

\begin{definition}\label{def:func}
For a function $f:\R^k\mapsto \R$, a \Defn{functional arrangement} $\FUNC{f}=\FUNC{f}(\vect x)$ for $f$ is a $k$-parameter family of point configurations in $\R^2$ such that the following conditions hold:
\begin{compactenum}[\rm (i)]
\item For all $\vect x=(x_1,\dots, x_k)$ in $\R^k$, the functional arrangement $\FUNC{f}(\vect x)$ contains the \Defn{output point} $f(\vect x) e_1$, the \Defn{input points} $x_i e_1,\ i\in\{1,\dots,k\},$ and the set $\Q^2+\mathbf{1}$. 
\item For all $\vect x\in \R^k$, the set $\{x_i e_1: i\in\{1,\dots,k\}\} \cup (\Q^2+\mathbf{1})$ frames the point configuration $\FUNC{f}(\vect x)$.
\end{compactenum}
For the last condition, let $\varphi(\FUNC{f})(x)$ denote any point configuration Lawrence equivalent the functional arrangement $\FUNC{f}(x)$, where $\varphi$ is the bijection of points that induces the Lawrence equivalence.
\begin{compactenum}[\rm (i)]
\setcounter{enumi}{2}
\item For all $\vect x\in \R^k$, if $\varphi$ is the identity on $\Q^2+\mathbf{1}$ and $\varphi(f(x)e_1)=f(x)e_1$, we have $\varphi(xe_1)\in f^{-1}(f(x))e_1 \subset \R^{2\times k}$.
\end{compactenum}
\end{definition}

Hence, a functional arrangement essentially computes a function and its inverse by means of its point-line incidences alone. An just as like functions, they can be combined:

\begin{lemma}\label{lem:composition}
Let $\mathrm{F}(x,z)$ and $\mathrm{G}(y,z)$, $x\in \R^k, y\in \R^\ell, z\in \R^m$, denote functional arrangements for functions $f:\R^{k+m}\mapsto \R$ and $g:\R^{\ell+m}\mapsto \R$, respectively. Then \[\mathrm{F}(g(y,z),x',z)\cup \mathrm{G}(y,z),\quad x':=(x_2,\dots,x_k)\]
is a functional arrangement for the function $f(g(y,z),x',z)$ from  $\R^{k+m+\ell-1}$ to $\R$.\qed
\end{lemma}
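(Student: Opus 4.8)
The plan is to set $H:=\mathrm{F}(g(y,z),x',z)\cup\mathrm{G}(y,z)$ and to verify, one at a time, the three conditions of Definition~\ref{def:func} for the function $h$ given by $h(y,x',z):=f(g(y,z),x',z)$. The structural observation driving everything is that this union is glued along shared points: the first input point $g(y,z)e_1$ of the instance $\mathrm{F}(g(y,z),x',z)$ is exactly the output point of $\mathrm{G}(y,z)$, and both arrangements contain $\Q^2+\mathbf{1}$. Consequently the input points of $H$ are the $y_je_1$ (contributed by $\mathrm{G}$), the $x_ie_1$ with $i\in\{2,\dots,k\}$ (contributed by $\mathrm{F}$) and the $z_je_1$ (shared), while its output point is $f(g(y,z),x_2,\dots,x_k,z)e_1=h(y,x',z)e_1$, the output point of $\mathrm{F}$. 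Condition~(i) for $H$ is then immediate from condition~(i) for $\mathrm{F}$ and for $\mathrm{G}$.

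For condition~(ii) I would use the elementary fact that the restriction of a Lawrence equivalence to a subconfiguration is again a Lawrence equivalence (any hyperplane separation of the subconfiguration is the trace of one of the whole). Let $\varphi$ be a Lawrence equivalence of $H$ that is the identity on $\Q^2+\mathbf{1}$ and on every input point of $H$. Restricting $\varphi$ to $\mathrm{G}(y,z)$: it fixes all input points of $\mathrm{G}$ and $\Q^2+\mathbf{1}$, so condition~(ii) for $\mathrm{G}$ forces $\varphi|_{\mathrm{G}(y,z)}=\mathrm{id}$; in particular $\varphi$ fixes the shared point $g(y,z)e_1$. Restricting $\varphi$ now to $\mathrm{F}(g(y,z),x',z)$: it fixes all of its input points (the point $g(y,z)e_1$ by the previous sentence, the $x_ie_1$ and $z_je_1$ by hypothesis) and $\Q^2+\mathbf{1}$, so condition~(ii) for $\mathrm{F}$ forces $\varphi|_{\mathrm{F}(g(y,z),x',z)}=\mathrm{id}$. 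Hence $\varphi=\mathrm{id}$, which is condition~(ii) for $H$.

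For condition~(iii), let $\varphi$ be a Lawrence equivalence of $H$ fixing $\Q^2+\mathbf{1}$ and the output point $h(y,x',z)e_1$; write its effect on the relevant points as $y_je_1\mapsto\tilde y_je_1$, $x_ie_1\mapsto\tilde x_ie_1$, $z_je_1\mapsto\tilde z_je_1$ and $g(y,z)e_1\mapsto c\,e_1$, and set $\tilde y=(\tilde y_1,\dots,\tilde y_\ell)$, $\tilde x'=(\tilde x_2,\dots,\tilde x_k)$, $\tilde z=(\tilde z_1,\dots,\tilde z_m)$. Restricting $\varphi$ to $\mathrm{F}(g(y,z),x',z)$ and applying condition~(iii) for $\mathrm{F}$ — whose output point, being that of $H$, is fixed — yields $f(c,\tilde x',\tilde z)=h(y,x',z)$. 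Restricting $\varphi$ to $\mathrm{G}(y,z)$ and invoking the functional behaviour of $\mathrm{G}$ yields $g(\tilde y,\tilde z)=c$. Combining the two, $h(\tilde y,\tilde x',\tilde z)=f(g(\tilde y,\tilde z),\tilde x',\tilde z)=f(c,\tilde x',\tilde z)=h(y,x',z)$, so the input tuple of $H$ is mapped by $\varphi$ into $h^{-1}(h(y,x',z))e_1$, which is condition~(iii) for $H$.

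The delicate step — and arguably the only one not reducible to bookkeeping — is the last claim, that $\varphi$ restricted to $\mathrm{G}(y,z)$ forces $g(\tilde y,\tilde z)=c$. The difficulty is that the shared point $g(y,z)e_1$ is neither an input point nor the output point of $H$, so nothing in the hypothesis of condition~(iii) for $H$ pins it down; in particular $\varphi$ need not fix the output point of $\mathrm{G}$, which is precisely the hypothesis under which condition~(iii) for $\mathrm{G}$ is phrased. Resolving this is where the strength of ``functional arrangement'' over ``framed configuration'' is used: one argues that $\varphi(\mathrm{G}(y,z))$ is again a functional arrangement of $g$, now on the inputs $(\tilde y,\tilde z)$, so that its output point $c\,e_1$ is forced to equal $g(\tilde y,\tilde z)e_1$; this combines condition~(iii) and the framing condition~(ii) for $\mathrm{G}$ together with the rigidity of the common block $\Q^2+\mathbf{1}$. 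Carrying this identification out carefully, while keeping track of how the restricted Lawrence equivalences interact with that common block, is the heart of the argument; conditions~(i), (ii) and the remainder of~(iii) are routine.
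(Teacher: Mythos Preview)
The paper states this lemma without proof (the \qed\ immediately follows the statement), treating composition of functional arrangements as self-evident. Your verification is therefore far more detailed than anything the paper offers, and your arguments for conditions~(i) and~(ii) are correct and complete; in particular, your use of the fact that a Lawrence equivalence restricts to subconfigurations is exactly the right tool.

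You are also right that condition~(iii) hides a genuine subtlety: the shared point $g(y,z)e_1$ is the output of $\mathrm{G}$ but not of $H$, so the hypothesis of~(iii) for $\mathrm{G}$ is not directly available. Your resolution --- that $\varphi(\mathrm{G}(y,z))$, agreeing with the family member $\mathrm{G}(\tilde y,\tilde z)$ on $\Q^2+\mathbf{1}$ and on the input points, must by the framing condition~(ii) coincide with it, forcing $c=g(\tilde y,\tilde z)$ --- is the correct idea. Strictly speaking this last step requires $\varphi(\mathrm{G}(y,z))$ and $\mathrm{G}(\tilde y,\tilde z)$ to be Lawrence equivalent in the first place (so that framing applies at all); Definition~\ref{def:func} does not literally demand that the Lawrence type of the family be constant in its parameters, but this holds for the von~Staudt arrangements of Proposition~\ref{prp:VonStaudt}, which are the only functional arrangements the paper actually uses. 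So your sketch is sound for the intended application, and you have put your finger on precisely the point where the paper's informal definition is doing some unacknowledged work.
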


\begin{prp}[cf.\ \cite{Staudt}, {\cite[Thm.~D]{Kapovich}}]\label{prp:VonStaudt}
Every integer coefficient polynomial $\psi$ is realized by a functional arrangement $\FUNC{\psi}$.  
\end{prp}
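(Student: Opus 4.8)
The plan is to assemble $\FUNC{\psi}$ out of a small number of \emph{elementary} functional arrangements by repeated application of Lemma~\ref{lem:composition}. Concretely, I would first construct functional arrangements for negation $x\mapsto -x$, for addition $(x,y)\mapsto x+y$, and for multiplication $(x,y)\mapsto xy$, together with the trivial identity arrangement $\{x\cdot e_1\}\cup(\Q^2+\mathbf 1)$ in which the same point plays the role of input and of output. Given an integer polynomial $\psi$ in $k$ variables, one writes it in Horner form, which exhibits $\psi$ as an iterated composite of additions and multiplications, the integer constants that appear being produced as iterated sums $(\pm1)+\cdots+(\pm1)$ of the point $1\cdot e_1=(1,0)\in\Q^2+\mathbf 1$. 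Since Lemma~\ref{lem:composition} ensures that a composite of functional arrangements is again one, it suffices to build the elementary pieces.

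For addition and multiplication I would use the classical von Staudt constructions of the sum and the product of two points on a line relative to a projective frame; see \cite{Staudt} and \cite[Ch.~9]{Kapovich}. Here the line carrying the inputs and the output is the first coordinate axis $\R e_1\subset\R^2$, and the frame is $\Q^2+\mathbf 1=\{0,1,2\}^2$, which contains the needed base points $0\cdot e_1=(0,0)$ and $1\cdot e_1=(1,0)$ and whose four corners lie in general position. Each such arrangement consists of finitely many points --- the base points, the input points, the output point, and auxiliary points --- each auxiliary point being the unique intersection of two lines spanned by points constructed earlier. Condition~(i) of Definition~\ref{def:func} is then immediate. Condition~(ii) follows by the argument already used in the proof of Proposition~\ref{prp:Qdm}: any bijection inducing a Lawrence equivalence preserves collinearity, so once it restricts to the identity on $\Q^2+\mathbf 1$ and on the input points it fixes every subsequently constructed auxiliary point, hence is the identity; thus $\{x_ie_1\}\cup(\Q^2+\mathbf 1)$ frames the arrangement. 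Condition~(iii) holds because these constructions are reversible: from the frame and the output value $x+y$ (respectively $xy$, when $xy\neq0$) one recovers the set of admissible input pairs $\{(u,v):u+v=x+y\}$ (respectively $\{(u,v):uv=xy\}$), so any $\varphi$ fixing the frame and the output must carry $\vect xe_1$ into $f^{-1}(f(\vect x))e_1$. Negation is obtained from the addition arrangement by specialising one argument to $0$, or directly from von Staudt's harmonic-conjugate construction.

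The step I expect to be the real obstacle is that Definition~\ref{def:func} demands $\FUNC{f}(\vect x)$ to be a genuine point configuration for \emph{every} value of the parameters, while the naive von Staudt sum and product constructions degenerate at special values (for example when an input equals $0$ or $1$, so that two auxiliary lines coincide and no longer cut out a point). This is the familiar difficulty in von Staudt--Mn\"ev type universality arguments, and I would handle it exactly as in \cite[Thm.~D]{Kapovich}: each elementary construction is replaced by a rigidified version in which all auxiliary points stay pinned down even at the degenerate parameter values, at the cost of finitely many extra points and lines, and without disturbing conditions~(i)--(iii). Once the elementary arrangements are available in this robust form, repeated application of Lemma~\ref{lem:composition} along a Horner decomposition of $\psi$ yields the desired functional arrangement $\FUNC{\psi}$.
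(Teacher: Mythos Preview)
Your proposal is correct and follows essentially the same approach as the paper: reduce via Lemma~\ref{lem:composition} to the elementary functional arrangements for addition and multiplication, realize these by the classical von Staudt constructions on the $e_1$-axis relative to the frame $\Q^2+\mathbf 1$, and build integer constants from $0$ and $1$. You are in fact more careful than the paper about the degeneracy of the auxiliary points at special parameter values; the paper simply exhibits $\mathrm{ADD}$ and $\mathrm{MLT}$ in a figure and leaves the verification of Definition~\ref{def:func} to the reader.
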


\begin{proof}
The proof of this fact is based on the classical von Staudt constructions (\cite{Staudt}, compare also \cite[Ch.~5]{Richter-Gebert2011}, \cite[Sec.~11.7]{RG} or \cite[Sec.~5]{Kapovich}), which are a standard tool to encode algebraic operations in point-line incidences. 

To construct the desired functional arrangements, notice that every integer coefficient polynomial in variable $x$ can be written using $0$, $1$ and $x$, combined by addition and multiplication. Hence, thanks to Lemma~\ref{lem:composition}, it suffices to provide:
\begin{compactitem}[$\circ$]
\item A functional arrangement $\mathrm{ADD}(\alpha,\beta)$ for the function $\mathrm{a}(\alpha,\beta)=\alpha+\beta$ computing the addition of two real numbers.
\item A functional arrangement $\mathrm{MLT}(\alpha,\beta)$ for the function $\mathrm{m}(\alpha,\beta)=\alpha\cdot\beta$ computing the product of two real numbers.  
\end{compactitem}
Both functional configurations are shown below. We invite the reader to check that they indeed form functional arrangements for addition and multiplication.

\begin{figure}[htpb]
\centering
\begin{subfigure}[t]{.37\linewidth}
\centering
\includegraphics[width=\linewidth]{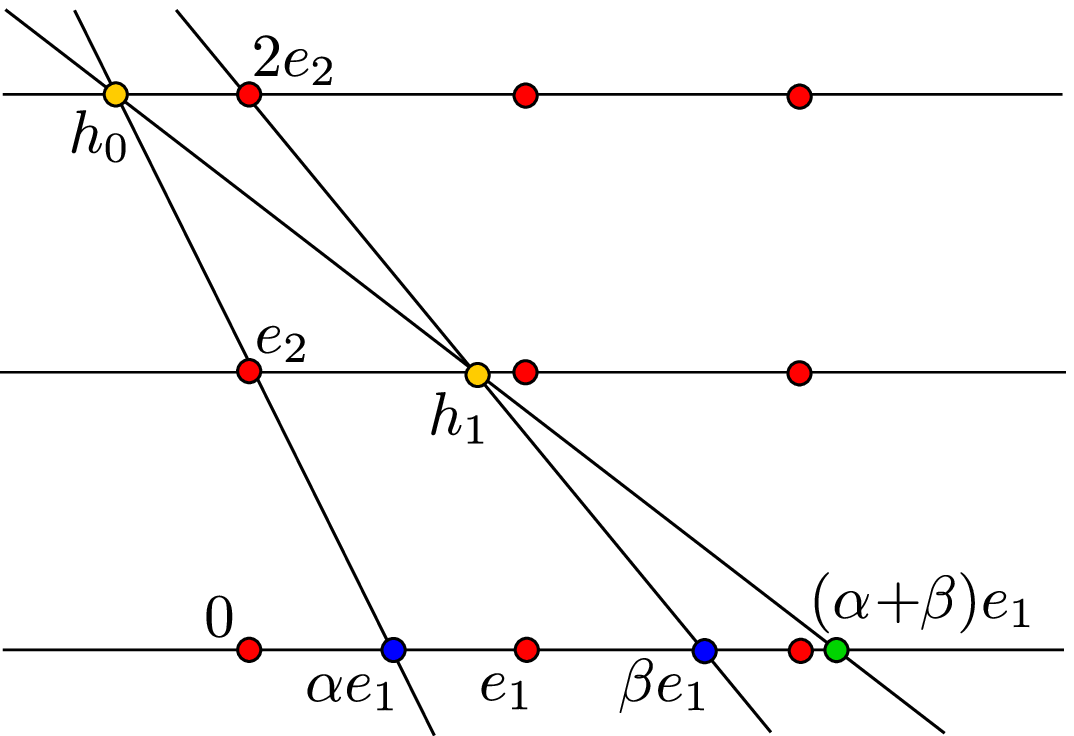}
\caption{$\mathrm{ADD}(\alpha,\beta)$}
\end{subfigure}\qquad\quad\qquad
\begin{subfigure}[t]{.37\linewidth}
\centering
\includegraphics[width=\linewidth]{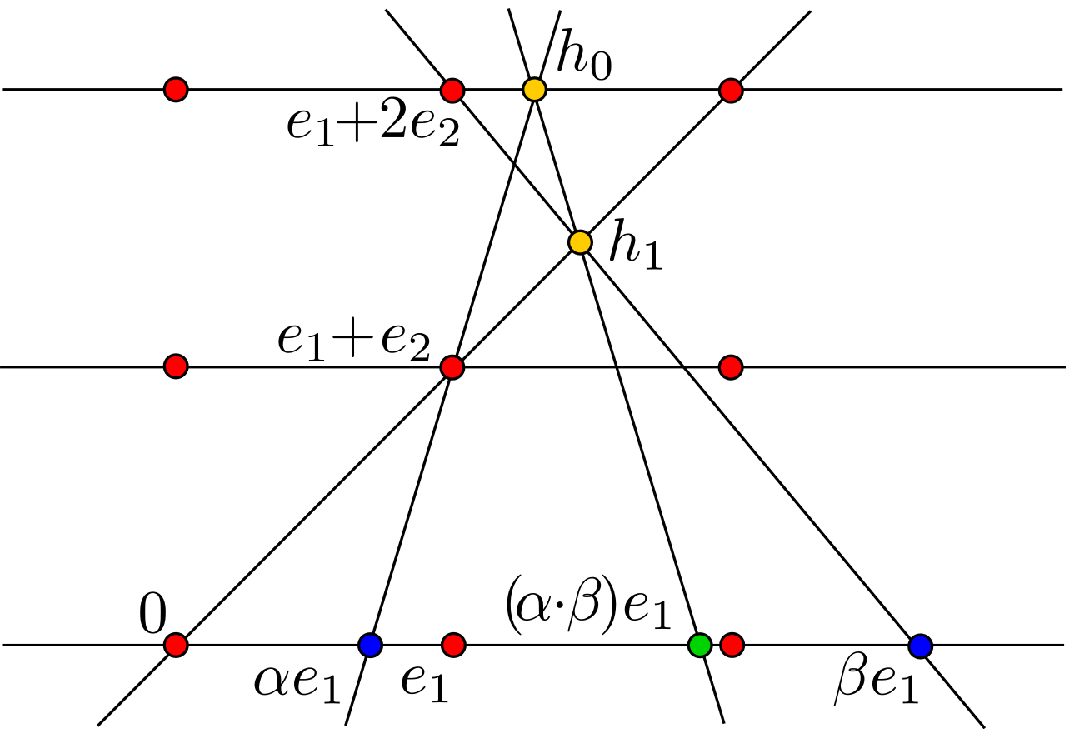}
\caption{$\mathrm{MLT}(\alpha,\beta)$ }
\end{subfigure}
\caption{Von Staudt constructions for addition, $\mathrm{ADD}(\alpha,\beta)$, and multiplication, $\mathrm{MLT}(\alpha,\beta)$. The blue points of the configuration form the input, the red points show $\Q^2+\mathbf{1}$, the yellow points are auxiliary to the construction and the green points give the output.}  \label{fig:VonStaudt}
\end{figure}

By switching output and input points of these functional arrangements, we also obtain functional arrangements $\mathrm{SUB}(\alpha,\beta)$ and $\mathrm{DIV}(\alpha,\beta)$ for $\mathrm{s}(\alpha,\beta)=\alpha-\beta$ and  $\mathrm{d}(\alpha,\beta)=\frac{\alpha}{\beta}$.
\end{proof}

\begin{example}
Let us construct a functional arrangement for $x\mapsto x^2-2=\mathrm{s}(\mathrm{m}(x,x),\mathrm{a}(1,1))$. Using Lemma~\ref{lem:composition}, this arrangement can be written as combination of the functional arrangements for addition, subtraction and multiplication:
\begin{align*}
\FUNC{x^2-2}(x)=\FUNC{\mathrm{s}(\mathrm{m}(x_1,x_2),\mathrm{a}(x_3,x_4))}(x,x,1,1)= \mathrm{SUB}(x^2,2) \cup \mathrm{MLT}(x,x) \cup \mathrm{ADD}(1,1).
\end{align*}
Figure~\ref{fig:VonStaudtExample} shows the evaluations of this functional arrangement at $\sqrt{2}$ and $\sqrt{3}$.

\begin{figure}[h!tpf]
\centering
\begin{subfigure}[t]{.47\linewidth}
\centering
\includegraphics[width=\linewidth]{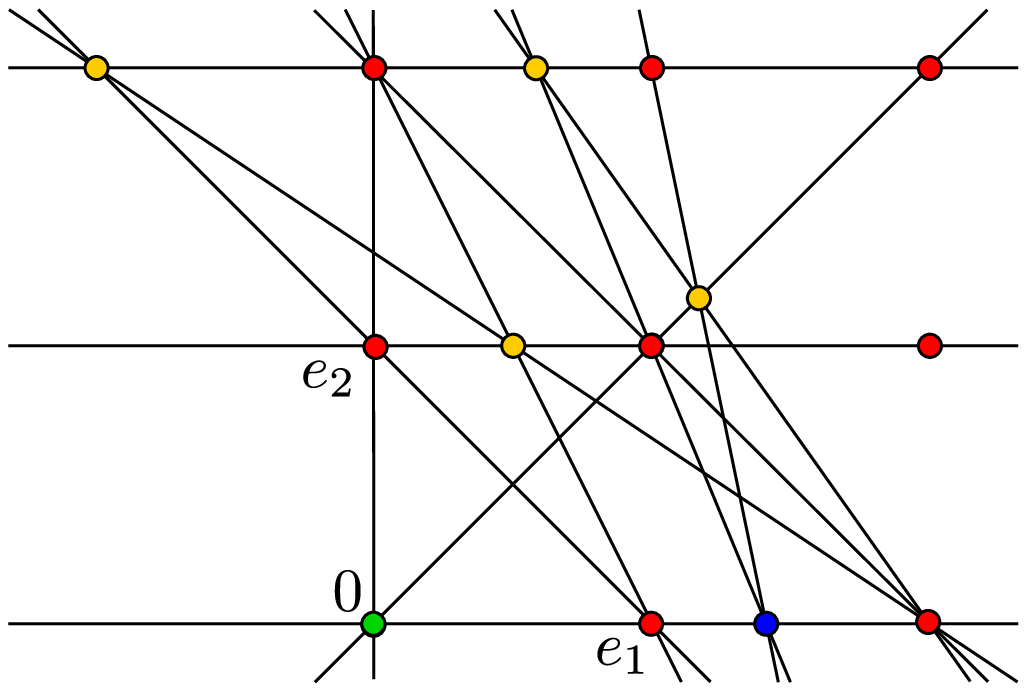}
\caption{$\FUNC{x^2-2}(\sqrt{2})$}\label{sfig:sqrt2}
\end{subfigure}\qquad
\begin{subfigure}[t]{.47\linewidth}
\centering
\includegraphics[width=\linewidth]{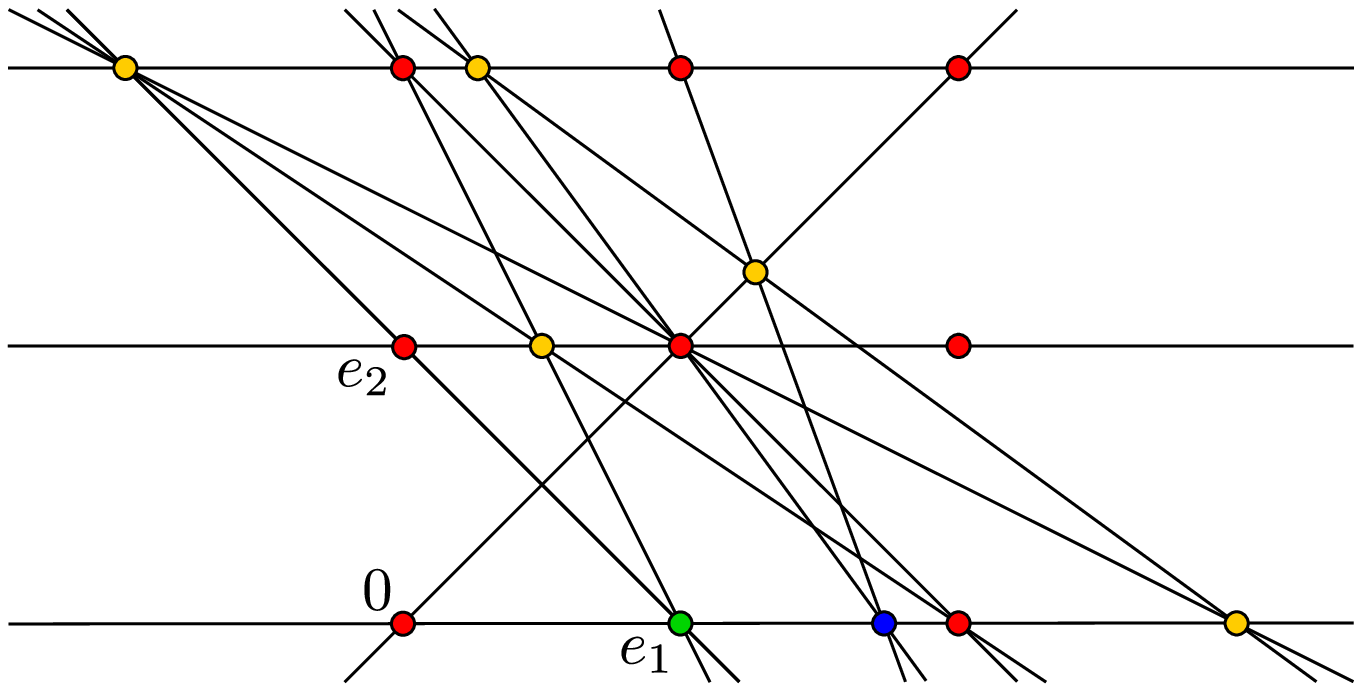}
\caption{$\FUNC{x^2-2}(\sqrt{3})$}
\end{subfigure}
\caption{Two evaluations of the functional arrangement $\FUNC{x^2-2}$.}\label{fig:VonStaudtExample}
\end{figure}

The point configuration $\FUNC{x^2-2}(\sqrt{2})$, as given above, is framed by $\Q^2+\mathbf{1}$. Hence, it enables us to compute $\sqrt{2}$ from $\Q^2+\mathbf{1}$. Similar point configurations for any algebraic number are given in the following corollary.

\end{example}

\begin{cor}\label{cor:vonS}
For each real algebraic number $\zeta$, there is a point configuration $\COOR{\zeta}\subset \R^2$ framed by $\Q^2+\mathbf{1}$ such that $\zeta \ve_1\in \COOR{\zeta}$.
\end{cor}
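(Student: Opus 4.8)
The plan is to reduce the claim for an arbitrary real algebraic number $\zeta$ to Proposition~\ref{prp:VonStaudt}, via the minimal polynomial of $\zeta$. First I would fix a polynomial $\psi\in\Z[x]$ with $\psi(\zeta)=0$; for instance the (integer-scaled) minimal polynomial of $\zeta$ over $\Q$, so that $\psi^{-1}(0)$ is a finite set. By Proposition~\ref{prp:VonStaudt} there is a functional arrangement $\FUNC{\psi}=\FUNC{\psi}(x)$ for $\psi$; I would then set
\[
\COOR{\zeta}:=\FUNC{\psi}(\zeta),
\]
the evaluation of this functional arrangement at $x=\zeta$. By Definition~\ref{def:func}(i), $\FUNC{\psi}(\zeta)$ contains the input point $\zeta\ve_1$, which gives $\zeta\ve_1\in\COOR{\zeta}$, as well as the set $\Q^2+\mathbf 1$ and the output point $\psi(\zeta)\ve_1 = 0\cdot\ve_1 = \veczero$.

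Next I would check that $\Q^2+\mathbf 1$ frames $\COOR{\zeta}$. Let $(P',R')$ — here just a point configuration — be Lawrence equivalent to $\COOR{\zeta}$ via a bijection $\varphi$ with $\varphi$ the identity on $\Q^2+\mathbf 1$; I must show $\varphi=\mathrm{id}$. The key observation is that the origin $\veczero$, being the output point $\psi(\zeta)\ve_1$, is not an arbitrary point: since $\Q^2+\mathbf 1$ contains a projective basis of $\R^2$ and $\varphi$ fixes it pointwise, any point of $\COOR{\zeta}$ lying in the affine span of (a basis subset of) $\Q^2+\mathbf 1$ — in particular the point $\veczero$ on the line spanned by two points of $\Q^2+\mathbf 1$, e.g. $(1,1)$ and $(-1,-1)$, or more carefully any point whose position is pinned down by incidences with $\Q^2+\mathbf 1$ — is fixed by $\varphi$. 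Hence $\varphi$ fixes $\Q^2+\mathbf 1$ and the output point $\psi(\zeta)\ve_1$. Now Definition~\ref{def:func}(iii) applies and yields $\varphi(\zeta\ve_1)\in\psi^{-1}(\psi(\zeta))\,\ve_1=\psi^{-1}(0)\,\ve_1$, a \emph{finite} set of points on the $\ve_1$-axis. Finally, condition (ii) of Definition~\ref{def:func} says $\{\zeta\ve_1\}\cup(\Q^2+\mathbf 1)$ frames $\FUNC{\psi}(\zeta)$: once $\varphi$ fixes $\Q^2+\mathbf 1$ and $\zeta\ve_1$ it is the identity. So it remains only to rule out the finitely many wrong values; this is where I expect the main (admittedly minor) obstacle to lie.

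To finish, I would argue that $\varphi$ cannot move $\zeta\ve_1$ to another root of $\psi$. The cleanest route is a continuity/degree-of-freedom argument: the Lawrence equivalence class of $\COOR{\zeta}$ is cut out by the same incidences and non-incidences for all nearby parameter values, so the set of admissible images of the input point under some $\varphi$ is, on the one hand, contained in the finite set $\psi^{-1}(0)\,\ve_1$, and on the other hand it contains $\zeta\ve_1$ and is contained in a connected piece of the realization space (the construction is "rigid after fixing $\Q^2+\mathbf 1$" in a way that leaves the input point free to vary continuously, tracing out an open arc of the axis); a finite set meeting an arc in an open subset must be all of a component, contradiction unless $\psi^{-1}(0)$ is a single point near $\zeta$ — which it is, as distinct roots of $\psi$ are isolated. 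Alternatively, and perhaps more in the spirit of the paper, one chooses the functional arrangement at the outset to be "honest" in the sense that the incidence pattern of $\FUNC{\psi}(\zeta)$ already distinguishes $\zeta$ from the other roots of $\psi$ (for instance by also recording, via an auxiliary von Staudt sub-arrangement, the sign of $\psi'(\zeta)$ or an order relation $\zeta < r$ or $\zeta > r$ for each other real root $r$); then $\varphi$ preserves these incidences and hence the relevant inequalities, pinning $\zeta\ve_1$ exactly. Either way, $\varphi=\mathrm{id}$, so $\Q^2+\mathbf 1$ frames $\COOR{\zeta}$, completing the proof. \qed
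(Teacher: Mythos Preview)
Your outline follows the paper's approach closely, and the reduction to Proposition~\ref{prp:VonStaudt} together with the observation that the output point $\psi(\zeta)\ve_1=\veczero$ lies in $\Q^2+\mathbf 1$ (note: $\Q^2+\mathbf 1=\{0,1,2\}^2$, so $\veczero$ is literally one of its nine points; your example $(-1,-1)$ is not in it) is exactly what the paper does. The only substantive issue is the last step, where you rule out the other real roots of $\psi$.

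Your first route, the continuity argument, does not work as stated. The Lawrence equivalence class of $\FUNC{\psi}(x)$ is determined by finitely many polynomial sign conditions in $x$, hence is constant on a semialgebraic set that may well be a union of several intervals; nothing prevents two distinct real roots $\zeta$ and $\zeta'$ of $\psi$ from lying in the same Lawrence class. In that situation the bare configuration $\FUNC{\psi}(\zeta)$ is genuinely \emph{not} framed by $\Q^2+\mathbf 1$, so setting $\COOR{\zeta}:=\FUNC{\psi}(\zeta)$ alone is insufficient. The sentence ``a finite set meeting an arc in an open subset must be all of a component'' conflates the (continuous) realization space with the (discrete) set of admissible images and does not yield a contradiction.

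Your second route is the correct one, and it is precisely what the paper does, made concrete. One first treats rational $\zeta=\tfrac{p}{q}$ separately, taking $\psi(x)=qx-p$; then $\psi^{-1}(0)=\{\zeta\}$ and $\COOR{\zeta}:=\FUNC{\psi}(\zeta)$ is already framed by $\Q^2+\mathbf 1$. For irrational algebraic $\zeta$ one picks rationals $\zeta^-<\zeta<\zeta^+$ with $\zeta$ the unique root of $\psi$ in $[\zeta^-,\zeta^+]$, and sets
\[
\COOR{\zeta}:=\COOR{\zeta^-}\cup\FUNC{\psi}(\zeta)\cup\COOR{\zeta^+}.
\]
The two rational sub-configurations are framed by $\Q^2+\mathbf 1$ by the base case, so any $\varphi$ fixing $\Q^2+\mathbf 1$ fixes $\zeta^-\ve_1$ and $\zeta^+\ve_1$; Lawrence equivalence then forces $\varphi(\zeta\ve_1)=\zeta'\ve_1$ with $\zeta'$ a root of $\psi$ lying in $[\zeta^-,\zeta^+]$, hence $\zeta'=\zeta$, and condition~(ii) finishes the argument exactly as you describe.
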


\begin{proof}
If $\zeta$ is algebraic of degree $\le 1$ (i.e.\ $\zeta$ is rational), let $p,q\in\Z$ be such that $\zeta=\frac{p}{q}$ and set $\psi(x)=qx-p$. We then define $\COOR{\zeta}:=\FUNC{\psi}(\zeta)$.

If $\zeta$ is instead of degree $\ge 2$ (i.e.\ $\zeta$ is irrational, but algebraic), and $\psi$ is an integer coefficient polynomial with root $\zeta$, then let $\zeta^+$ and $\zeta^-$ denote rational numbers with the property that $\zeta$ is the only root of $\psi$ in the interval $[\zeta^-, \zeta^+]$. 
Then the desired point configuration is given by \[\COOR{\zeta}:=\COOR{\zeta^-} \cup \FUNC{\psi}(\zeta)\cup \COOR{\zeta^+}, \]
which contains $\zeta$ by construction, and it is framed by $\Q^2+\mathbf{1}$. Indeed, let $\varphi(\COOR{\zeta})$ be a configuration Lawrence equivalent to $\COOR{\zeta}$, where the equivalence is induced by the bijection $\varphi$. If $\varphi$ is the identity on $\Q^2+\mathbf{1}$, then by Definition~\ref{def:func}(iii) and since ${\psi}(\zeta)e_1=0\in\Q^2+\mathbf{1}$, we obtain $\varphi(\zeta^-e_1)=\zeta^-e_1$, $\varphi(\zeta^+e_1)=\zeta^+e_1$ and $\varphi(\zeta e_1)=\zeta' e_1$. Here $\zeta'$ must be a root of $\psi$, which lies in the interval $[\zeta^-, \zeta^+]$ by Lawrence equivalence. This root is unique, so $\varphi(\zeta e_1)=\zeta e_1$. 
To sum up, $\Q^2+\mathbf{1}$ determines $\zeta e_1$, which together with $\Q^2+\mathbf{1}$ frames $\COOR{\zeta}$ by Definition~\ref{def:func}(ii). 
\end{proof}

\begin{cor}\label{cor:algpoint}
Let $\zeta$ be any point in $\R^d_+$, $d\ge 3$, with algebraic coordinates. Then there is a projectively unique point configuration $\COOR{\zeta}$ containing $\zeta$ and $\Q^d+\mathbf{1}$.
\end{cor}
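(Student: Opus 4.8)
The plan is to assemble $\COOR{\zeta}$ coordinate by coordinate, using the one-dimensional building blocks $\COOR{\zeta_i}$ from Corollary~\ref{cor:vonS} together with the configuration $\PROJ{\vect p}$ from Lemma~\ref{lem:cube}, and to glue everything along $\Q^d+\mathbf 1$ by repeated application of the union lemma (Lemma~\ref{lem:union}). Write $\zeta = (\zeta_1,\dots,\zeta_d)$ with each $\zeta_i$ a positive algebraic number. First I would fix an affine embedding $\iota_i\colon \R^2 \hookrightarrow \R^d$ sending the first coordinate axis of $\R^2$ to the $i$-th coordinate axis of $\R^d$ and sending $\Q^2+\mathbf 1$ to $\Q^d+\mathbf 1 \cap \aff\{o', \ve_i\text{-axis plus one auxiliary direction}\}$; more precisely, $\Q^2 + \mathbf 1$ embeds as a planar sub-configuration of $\Q^d+\mathbf 1$ spanned by $\ve_i$ and, say, $\ve_{i+1}$ (indices mod $d$), which is legitimate since $d\ge 3$ guarantees a second coordinate direction is available and $\Q^d+\mathbf 1$ contains every such planar slice as a translate of $\Q^2+\mathbf 1$. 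Under $\iota_i$ the image of $\COOR{\zeta_i}$ is a point configuration $\COOR{\zeta_i}'$ in $\R^d$ that contains the point $\zeta_i \ve_i$ and is framed by the planar copy of $\Q^2+\mathbf 1$ it shares with $\Q^d+\mathbf 1$, hence a fortiori framed by $\Q^d+\mathbf 1$ (Examples~\ref{ex:stdet}(ii)).

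Next I would form $R_0 := \Q^d+\mathbf 1$, which is projectively unique by Proposition~\ref{prp:Qdm} (translation preserves projective uniqueness). Since each $\COOR{\zeta_i}'$ is framed by $Q := \Q^d+\mathbf 1 \subseteq R_0$ and contains $Q$, Lemma~\ref{lem:union} applied $d$ times shows that $R_0 \cup \COOR{\zeta_1}' \cup \dots \cup \COOR{\zeta_d}'$ is projectively unique; in particular the points $\zeta_1\ve_1, \dots, \zeta_d\ve_d$ and their halves $\tfrac{\zeta_i \ve_i}{2}$ — which I should arrange to be visible in each $\COOR{\zeta_i}'$ by including in the planar construction the input/output scaffolding, or alternatively obtaining the midpoint $\tfrac{\zeta_i\ve_i}{2}$ as an incidence from $\zeta_i\ve_i$, $o$ and a point of $\Q^d+\mathbf 1$ — all lie in this configuration and are determined by $\Q^d+\mathbf 1$. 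Thus the set $L(\zeta) = \{o, \zeta_1\ve_1,\dots,\zeta_d\ve_d, \tfrac{\zeta_1\ve_1}{2},\dots,\tfrac{\zeta_d\ve_d}{2}\}$ is contained in, and framed by $\Q^d+\mathbf 1$ within, the configuration built so far.

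Finally I would invoke Lemma~\ref{lem:cube}: the configuration $\PROJ{\zeta} = \tfrac{\mathrm D}{2}(\Q^d+\mathbf 1)$ with $\mathrm D = \mathrm D[\zeta_1,\dots,\zeta_d]$ contains $\zeta$ and is framed by $L(\zeta)$. Since $L(\zeta)$ is contained in the projectively unique configuration $R := R_0 \cup \bigcup_i \COOR{\zeta_i}'$ constructed above and $\PROJ{\zeta} \supseteq L(\zeta)$ is framed by $L(\zeta)$, one more application of Lemma~\ref{lem:union} (with $Q = L(\zeta)$, noting $\PROJ\zeta$ also contains $o$ and needs $L(\zeta)\subseteq R$, which holds) yields that $\COOR{\zeta} := R \cup \PROJ{\zeta}$ is a projectively unique point configuration. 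It contains $\zeta$ (from $\PROJ\zeta$) and contains $\Q^d + \mathbf 1$ (as $R_0$), as required.

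The main obstacle I anticipate is bookkeeping the compatibility of the various embedded copies of $\Q^2+\mathbf 1$ and $\Q^d+\mathbf 1$: one must make sure that for each $i$ the planar von Staudt arrangement $\COOR{\zeta_i}$ is embedded so that its copy of $\Q^2+\mathbf 1$ genuinely coincides with a fixed planar subset of the single configuration $\Q^d+\mathbf 1$ (rather than $d$ unrelated planar configurations), that these embeddings do not create unintended coincidences of points (forcing possibly a projective change of the planar picture, harmless since $\COOR{\zeta_i}$ is defined up to projective equivalence and only its incidence structure matters for framing), and that the auxiliary halved points $\tfrac{\zeta_i\ve_i}{2}$ required by $L(\zeta)$ are actually determined — either by enlarging each planar arrangement or by an extra incidence argument. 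Everything else is a mechanical chaining of Lemma~\ref{lem:union}.
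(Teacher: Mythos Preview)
Your proposal is correct and follows essentially the same route as the paper: embed each planar $\COOR{\zeta_i}$ into $\operatorname{span}(e_i,e_{i+1})$ so that its copy of $\Q^2+\mathbf{1}$ sits inside $\Q^d+\mathbf{1}$, adjoin $\PROJ{\zeta}$, and chain Lemma~\ref{lem:union}. The one point you flag as an obstacle---pinning down the halved points $\tfrac{\zeta_i}{2}e_i$ needed for $L(\zeta)$---the paper resolves in the most direct way, by simply throwing in the additional planar configurations $\COOR{\nicefrac{\zeta_i}{2}}_{i,i+1}$ from Corollary~\ref{cor:vonS} alongside the $\COOR{\zeta_i}_{i,i+1}$.
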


\begin{proof}
Let  $\zeta=(\zeta_1, \dots, \zeta_d)$. For any $i$, let $\COOR{\zeta_i}_{i,i+1}$ resp.\ $\COOR{\nicefrac{\zeta_i}{2}}_{i,i+1}$ denote the configurations provided by Corollary~\ref{cor:vonS}, naturally embedded into the plane spanned by $e_i$ and $e_{i+1}$ (using a cyclic labelling for the base vectors $e_{\ast}$). We obtain in \[\COOR{\zeta}:=(\Q^d+\mathbf{1})\cup \PROJ{\zeta}\cup\bigcup_{i\in \{1,\dots,d\}}\COOR{\zeta_i}_{i,i+1}\cup\bigcup_{i\in \{1,\dots,d\}}\COOR{\nicefrac{\zeta_i}{2}}_{i,i+1}\]
the desired point configuration: $\COOR{\zeta}$ contains $\zeta$ and $\Q^d+\mathbf{1}$ by construction and it is projectively unique by Lemma~\ref{lem:union}.
\end{proof}

\paragraph*{Conclusion of proof}

\begin{proof}[{\bf Proof of Theorem~\ref{thm:ratprj}}]
Let $P$ denote a algebraic polytope in $\R^d$. We assume that $d\ge 3$, since if $d\leq 2$ we can realize $P$ as a face of some $3$-dimensional pyramid. By dilation and translation, we may assume that $P$ lies in the interior of the cube $\mathrm{C}:=\conv (\Q^d+\mathbf{1})$. Consider now the point configuration
\[\COOR{P}:=\bigcup_{v\in \F_0(P)} \COOR{v},\] \vskip -1.5mm
\noindent where $\COOR{v},\ v\in \F_0(P),$ is the point configuration provided by Corollary~\ref{cor:algpoint}. Set $Q:=\F_0(P)$ and $R:=\COOR{P}\setminus Q$. With this, we have that $(P,Q,R)$ is a weak projective triple. Indeed,

\begin{compactenum}[(1)]
\item $Q\cup R=\COOR{P}$ is a projectively unique point configuration by Corollary~\ref{cor:algpoint} and Lemma~\ref{lem:union},
\item $Q$ obviously frames $P$ (cf.\ Example~\ref{ex:stdet}(i)), and
\item since $P\subset \mathrm{int}\, \mathrm{C}$, we have $\F_0(\mathrm{C})\subset R$, and hence any of the facet hyperplanes of $\mathrm{C}$ can be chosen as wedge hyperplane for the triple.\end{compactenum}
Thus, by Corollary~\ref{cor:wpt}, there exists a projectively unique polytope that contains a face projectively equivalent to $P$.
\end{proof}
\setcounter{subsection}{18}
\subsection{Subpolytopes of stacked polytopes}

In this section, we disprove Shephard's conjecture. While this alone could be done using Theorem~\ref{thm:ratprj} (with arguments slightly differing from those below), we here use a refined argumentation to construct combinatorial types of $5$-dimensional polytopes that are not realizable as subpolytopes of $5$-dimensional stacked polytopes (Theorem~\ref{thm:proj}). 
Instead of Theorem~\ref{thm:ratprj}, we will use the following result of Below.

\begin{theorem}[{\cite[Ch.~5]{Below2002}}, see also {\cite[Thm.~4.1]{Dobbins2011}}]\label{thm:Below}
Let $P$ be any algebraic $d$-dimensional polytope. Then there is a polytope $\widehat{P}$ of dimension $d+2$ that contains a face $F$ that is projectively equivalent to $P$ in every realization of $\widehat{P}$.
\end{theorem}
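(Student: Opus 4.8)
The plan is to reduce the statement to a coordinate-encoding problem and solve it with von Staudt constructions, realizing the resulting point--line incidences as \emph{forced} intersections among the edges of a single $(d+2)$-dimensional polytope. I would build $\widehat{P}$ so that its vertex set contains, on the one hand, a copy of the algebraic polytope $P$ sitting as a face $F$, and on the other hand a combinatorially rigid ``frame,'' playing the role of the standard grid $\Q^2+\mathbf{1}$, that is pinned down up to a projective transformation by the face lattice of $\widehat{P}$ alone. The two extra dimensions are used as a common ``computation layer'': all of the algebraic relations cutting out the coordinates of $P$ are encoded there simultaneously, rather than spending one dimension per point as the Lawrence-extension route behind Corollary~\ref{cor:wpt} does. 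Since a planar incidence structure can host arbitrarily many points and lines, two dimensions suffice no matter how large $d$ or how complicated the coordinates are, which is what makes the uniform bound $d+2$ possible.

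For the encoding itself I would reuse the machinery already in place. Every coordinate of every vertex of $P$ is a real algebraic number, hence a root of an integer-coefficient polynomial, so by Proposition~\ref{prp:VonStaudt} and Corollary~\ref{cor:vonS} there is a planar configuration, framed by $\Q^2+\mathbf{1}$, whose incidences force a marked point to the value $\zeta$ on a line, with $\zeta$ singled out as the unique root in a prescribed rational interval. Amalgamating these over all coordinates of all vertices yields one planar incidence structure $C$ that reconstructs $\F_0(P)$ from the frame by purely projective operations. The delicate point is that vertices of a polytope are in convex position, so these collinearities cannot hold among vertices: each must instead be realized as the assertion that the meet of two edge-lines of $\widehat{P}$ lies on a third edge-line, an incidence among auxiliary (non-vertex) intersection points that is nevertheless dictated by the combinatorics.

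The core of the proof is the polytope construction that realizes $C$ rigidly while adding only two dimensions. I would place $P$ in $\R^d\times\{0\}\times\{0\}\subset\R^{d+2}$, lift the computation layer into the two transversal coordinates, and couple each vertex of $P$ to the computation by edges running into that layer, so that the planar incidences of $C$ become forced intersections of the lines these edges span and each scalar coordinate of $P$ is ``read off'' by a perspectivity into the plane. One then verifies that every prescribed point is a genuine vertex, that the prescribed collinearities are exactly those enforced by the chosen face lattice, and that no spurious coplanarities either over- or under-determine the gadget. Anchoring the frame as in Corollary~\ref{cor:algpoint} (using the projective uniqueness of the grid from Proposition~\ref{prp:Qdm} and the cube-framing of Example~\ref{ex:stdet}(iv)) makes it rigid up to a projective transformation of the layer; propagating this rigidity through the forced incidences of $C$ pins the coordinates of $F$, and undoing the transformation exhibits $F$ as projectively equivalent to $P$ in every realization.

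\textbf{The main obstacle} is exactly this dimension-efficient, convexity-preserving realization: encoding an arbitrarily large planar von Staudt computation as forced edge intersections of one convex polytope while spending only two extra dimensions independent of $d$, and at the same time guaranteeing that the frame is projectively rigid so that ``forced incidence'' upgrades to ``forced coordinate value.'' Keeping every auxiliary point extreme, preventing unwanted coplanarities from collapsing the construction, and ensuring that the coupling transmits rigidity between $P$ and the computation layer in both directions are the combinatorial points that require the careful bookkeeping of Below's argument.
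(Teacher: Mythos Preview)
The paper does not prove Theorem~\ref{thm:Below}; it is quoted as an external result of Below (with an alternative reference to Dobbins) and used as a black box in the proof of Theorem~\ref{thm:proj}. So there is no ``paper's own proof'' to compare against.

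As for your proposal itself: it is a reasonable narrative of what Below's construction accomplishes, and the high-level architecture you describe --- encode the algebraic coordinates by planar von Staudt incidences, realize those incidences as forced collinearities among edges of a convex polytope, and use only two transversal dimensions for the ``computation layer'' --- is indeed the spirit of the argument. However, your write-up is a plan, not a proof. You yourself identify the crux (``the main obstacle'') and do not resolve it: you give no concrete polytope, no verification that all auxiliary points are extreme, no check that the face lattice forces exactly the desired incidences and no unintended ones, and no argument that the frame's projective rigidity survives the passage to convex position. These are precisely the substantive and delicate parts of Below's thesis chapter; everything else is bookkeeping. If you intend to supply a proof rather than cite one, that construction and its verification are what you must actually write down.
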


The remainder of this section is concerned with the proof of Theorem~\ref{thm:proj}. For the convenience of the reader, we retrace, in a higher generality and with improved quantitative bounds, Shephard's ideas that lead him to discover $3$-dimensional polytopes that are not subpolytopes of stacked polytopes \cite{Shephard74}. We recall some notions.

\begin{definition}
A polytope is \Defn{$\kF{k}$-stacked} if it is the connected sum (cf.\ \cite[Sec.\ 3.2]{RG}) of polytopes with at most $k$ facets each. With this, a $\kF{(d+1)}$-stacked $d$-dimensional polytope is simply a \Defn{stacked} polytope.
\end{definition}
\vskip -2.4mm
\begin{figure}[htpb]
\centering
\begin{subfigure}[t]{.41\linewidth}
\centering
\includegraphics[width=.41\linewidth]{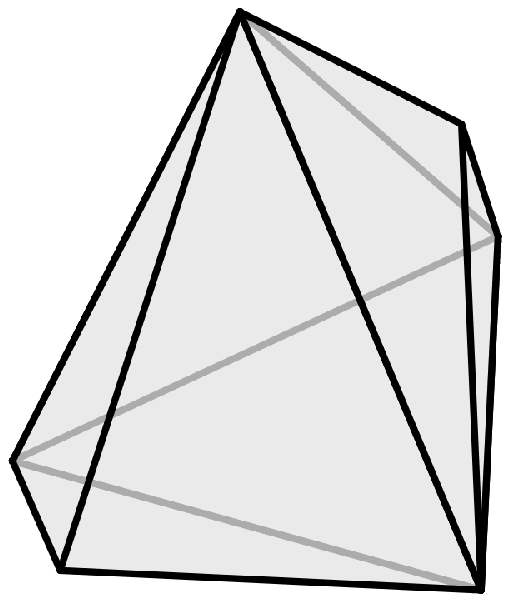}\qquad
\includegraphics[width=.41\linewidth]{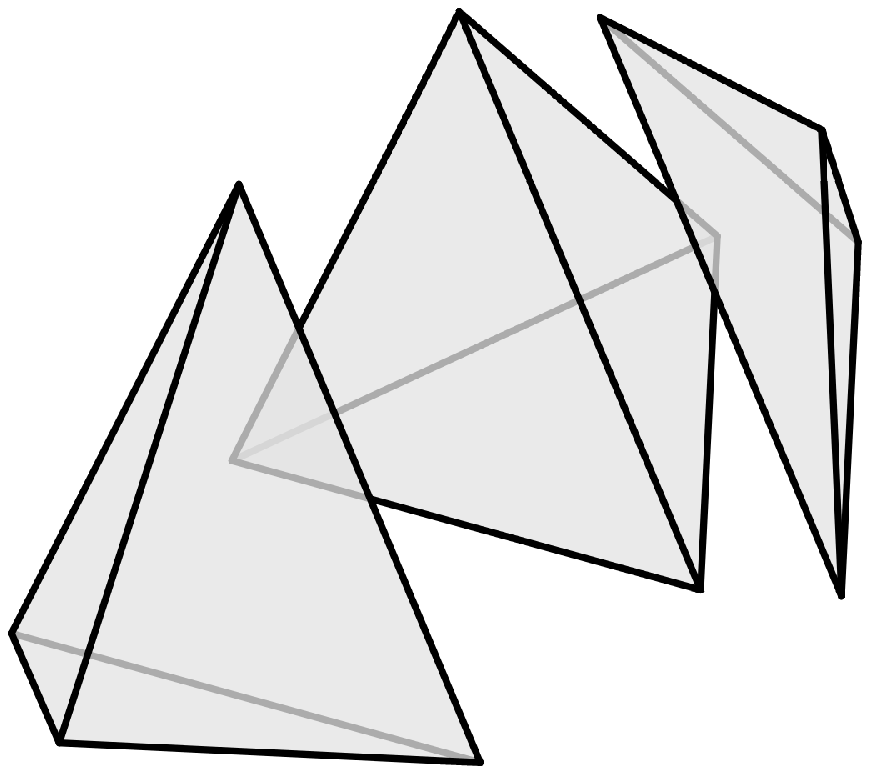}
\caption{$\kF{4}$-stacked}\label{fig:4stacked}
\end{subfigure}\qquad\qquad
\begin{subfigure}[t]{.41\linewidth}
\centering
\includegraphics[width=.41\linewidth]{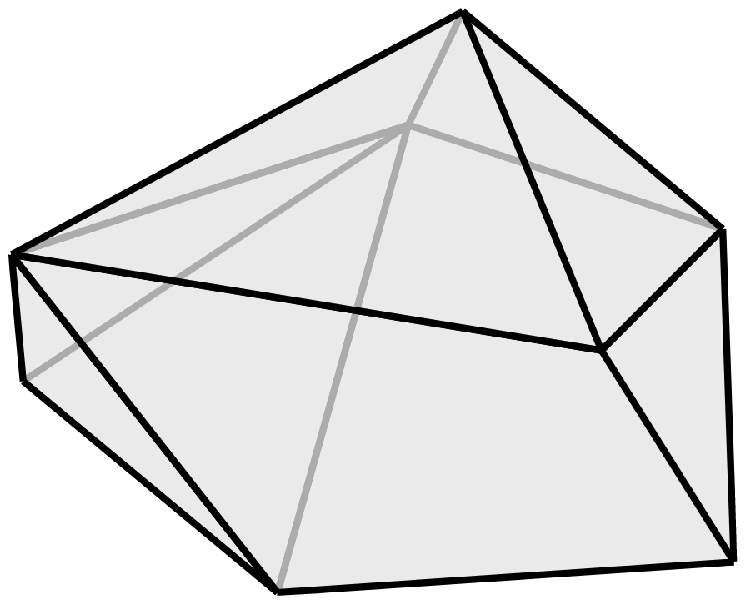}\qquad
\includegraphics[width=.41\linewidth]{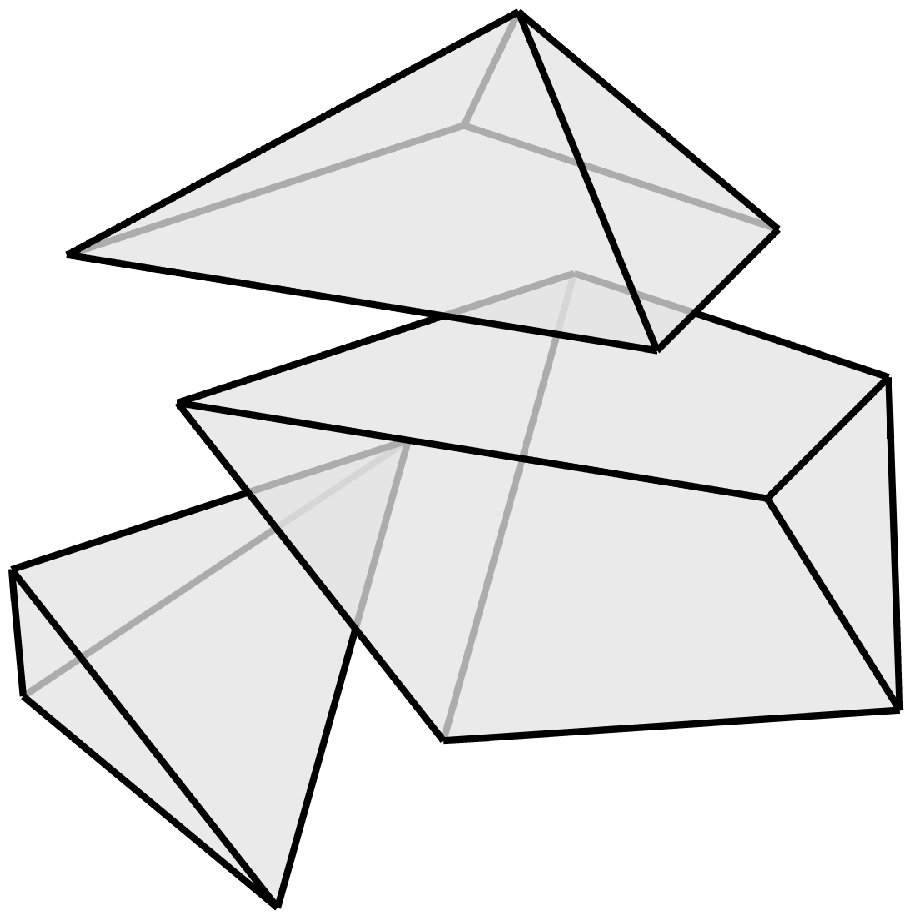}
\caption{$\kF{5}$-stacked}\label{fig:5stacked}
\end{subfigure}
\caption{A $\kF{4}$-stacked $3$-polytope (i.e.\ a stacked $3$-polytope) and a $\kF{5}$-stacked $3$-polytope.}\label{fig:stacked}
\end{figure}

Let $\dH(\cdot,\cdot)$ denote the Hausdorff distance between compact convex subsets of $\R^d$, cf.\ \cite[Sec.\ 1.8]{Schneider}. Let $B_r(x)$ denote the metric ball in $\R^d$ with center $x$ and radius $r$. Shephard's main observations are:

\begin{lemma}[cf.\ {\cite[$\mathbf{2.}$(i) {\&} (ii)]{Shephard74}}]\label{lem:dist}
Let $P$ be any polytope with $\dH(P, B_1(0))\le \e\le\frac{1}{36}$. Then, for any polytope $P'$ with $\F_0(P)\subset \F_0(P')$, we have $\dH(P', B_1(0))\le 6\sqrt{\e}$. 
\end{lemma}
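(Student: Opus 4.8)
The plan is to use the hypothesis $\F_0(P)\subseteq\F_0(P')$ in two opposite ways. On the one hand it forces $P=\conv\F_0(P)\subseteq\conv\F_0(P')=P'$, so that $P'$ inherits a large inscribed ball of $P$. On the other hand it forbids $P'$ from having vertices far from the unit sphere: a vertex $w$ of $P'$ with $\|w\|$ much bigger than $1$ would, together with the inscribed ball, ``swallow'' a vertex of $P$ pointing roughly in the direction of $w$, contradicting that this vertex is still a vertex of $P'$. Quantifying the word ``much'' gives the bound $6\sqrt{\e}$.

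First I would record the elementary consequences of $\dH(P,B_1(0))\le\e$. Put $\rho:=1-\e$. Since $P$ is closed and convex, a standard separating--hyperplane argument turns $B_1(0)\subseteq P+\e\overline B$ into the sandwich $B_\rho(0)\subseteq P\subseteq B_{1+\e}(0)$, and hence also $\overline B_\rho(0)\subseteq P\subseteq P'$. Next, for any unit vector $n$ the linear functional $\sprod{n}{\cdot}$ attains its maximum over $P$ on a face of $P$, and, because $\overline B_\rho(0)\subseteq P\subseteq\overline B_{1+\e}(0)$, that maximum lies in $[\rho,1+\e]$; picking a vertex of the maximizing face yields a vertex $v=v(n)$ of $P$ with $\sprod{n}{v}\ge\rho$ and $\|v\|\le 1+\e$. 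Writing $v=\sprod{n}{v}\,n+v^{\perp}$ with $v^{\perp}\perp n$, Pythagoras gives $\|v^{\perp}\|=\sqrt{\|v\|^2-\sprod{n}{v}^2}\le\sqrt{(1+\e)^2-\rho^2}=2\sqrt{\e}$.

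The core of the proof is the outer bound $P'\subseteq B_{1+6\sqrt{\e}}(0)$, for which it suffices to bound $\|w\|$ over the vertices $w$ of $P'$. Fix such a $w$, set $\|w\|=1+t$ with $t>0$ and $n:=w/\|w\|$, and suppose toward a contradiction that $t\ge 6\sqrt{\e}$. Take the vertex $v=v(n)$ of $P$ from the previous paragraph, set $\lambda:=\sprod{n}{v}/(1+t)$ and $b:=v^{\perp}/(1-\lambda)$. A direct computation gives $v=\lambda w+(1-\lambda)b$, with $\lambda\in(0,1)$ (as $0<\rho\le\sprod{n}{v}\le 1+\e<1+t$), and $\|b\|=\|v^{\perp}\|/(1-\lambda)\le 2\sqrt{\e}\,(1+t)/(1+t-\sprod{n}{v})\le 2\sqrt{\e}\,(1+t)/(t-\e)$. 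The scalar inequality $2\sqrt{\e}\,(1+t)\le(1-\e)(t-\e)$ holds for all $t\ge 6\sqrt{\e}$ once $\e\le\tfrac{1}{36}$ --- at $t=6\sqrt{\e}$ the left side is at most $4\sqrt{\e}$ while the right side is at least $5\sqrt{\e}$, and the right side has the larger slope in $t$ --- so $\|b\|\le\rho$, i.e.\ $b\in\overline B_\rho(0)\subseteq P'$. Thus $v=\lambda w+(1-\lambda)b$ is a proper convex combination of the two distinct points $w,b\in P'$ (distinct since $\|w\|=1+t>\rho\ge\|b\|$), so $v$ is not an extreme point of $P'$; but $v\in\F_0(P)\subseteq\F_0(P')$ is a vertex of $P'$, a contradiction. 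Hence $t<6\sqrt{\e}$, so every vertex of $P'$, and therefore all of $P'$, lies in $B_{1+6\sqrt{\e}}(0)=B_1(0)+6\sqrt{\e}\,\overline B$.

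The remaining (inner) inclusion is immediate: $B_1(0)\subseteq P+\e\overline B\subseteq P'+\e\overline B\subseteq P'+6\sqrt{\e}\,\overline B$ since $\e\le 6\sqrt{\e}$. Combining the two inclusions gives $\dH(P',B_1(0))\le 6\sqrt{\e}$. I expect the only genuine obstacle to be the cone estimate of the third paragraph: producing the decomposition $v=\lambda w+(1-\lambda)b$ cleanly, verifying the scalar inequality with the precise constant $6$ under $\e\le\tfrac{1}{36}$, and making sure the degenerate situations (a vertex of $P$ of norm exactly $1-\e$, or $t$ only marginally larger than $\e$) are handled --- all of which, on inspection, turn out to be harmless.
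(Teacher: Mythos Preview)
Your argument is correct and follows essentially the same approach as the paper's sketch: both exploit that a vertex $w$ of $P'$ far from the unit sphere, together with the inscribed ball $B_{1-\e}(0)\subseteq P'$, would swallow a vertex of $P$ that is forced to lie near the direction of $w$, contradicting $\F_0(P)\subseteq\F_0(P')$. The only cosmetic difference is that the paper locates this vertex of $P$ via the radial projection $v_P$ of $w$ onto $\partial P$ and the face--diameter bound $2\sqrt{\e}$, whereas you pick it as a maximizer of $\sprod{n}{\cdot}$ and then write down the explicit convex combination $v=\lambda w+(1-\lambda)b$; your version is in fact a cleaner quantitative realisation of what the paper leaves as a sketch.
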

Equivalently, if $Q'$ is any polytope with $\dH(Q', B_1(0))> \delta$, $0<\delta<1$, then $\dH(Q, B_1(0))>\frac{\delta^2}{36}$ for any subpolytope $Q$ of $Q'$.
\begin{proof}[Sketch of Proof]
By assumption, we have $B_{1-\e}(0)\subset P \subset  B_{1+\e}(0)$. Hence, every face of $P$ can be enclosed in some ball of radius $\sqrt{(1+\e)^2-(1-\e)^2}=2\sqrt{\e}$ and so every point in $\partial P$ is at distance at most $2\sqrt{\e}$ from some vertex of~$P$. 
Let now $v$ be any vertex of $P'$ not in $P$, and let $v_P$ denote the point of intersection of the line segment $\conv \{0,v\}$ with $\partial P$. Then $ \conv (\{v\}\cup  B_{1-\e}(0))\subset \conv (\{v\}\cup P)$, and since $\conv (\{v\}\cup P)$ contains no vertex of $P$ in the interior, we have \[\conv B_{2\sqrt{\e}}(v_P) \not\subset \conv (\{v\}\cup  B_{1-\e}(0)).\]
If $2\sqrt{\e}<1-\e$, this can be used to estimate the euclidean norm of $v$ as $||v||_2\le \frac{(1+\e)(1-\e)}{1-\e-2\sqrt{\e}}$, and hence if $\sqrt{\e}\le\nicefrac{1}{6}$, then $||v||_2\le1+6\sqrt{\e}$. This gives $B_{1-\e}(0)\subset P' \subset B_{1+6\sqrt{\e}}(0)$, or $\dH(P', B_1(0))\le6\sqrt{\e}$.
\end{proof}

\begin{lemma}[cf.\ {\cite[$\mathbf{2.}$(iii) {\&} (iv)]{Shephard74}}]\label{lem:dist2} 
For any $\kF{k}$-stacked $d$-dimensional polytope $S$, $d\ge 3$, we have \[\dH(S,B_1(0))\ge 2^{-2k-4}.\]
\end{lemma}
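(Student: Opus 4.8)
The statement to prove is Lemma~\ref{lem:dist2}: every $\kF{k}$-stacked $d$-polytope $S$ (with $d\ge 3$) satisfies $\dH(S,B_1(0))\ge 2^{-2k-4}$. The point is that a polytope with few facets per summand cannot be very round, and the roundness can only degrade under the connected-sum operation. I would argue by induction on the number of summands in a decomposition $S = S_1\#S_2\#\cdots\#S_m$ of $S$ into polytopes with at most $k$ facets each.

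\textbf{Step 1: the base case — a single polytope with at most $k$ facets cannot be too round.}
Suppose $S$ has at most $k$ facets. A polytope with $N$ facets is an intersection of $N$ halfspaces, so it contains a vertex (e.g. any vertex of $S$) at which at most $N$ facets meet; more to the point, one bounds how well such a polytope approximates a ball from below. The cleanest route: if $\dH(S,B_1(0))\le\e$ then $B_{1-\e}(0)\subseteq S\subseteq B_{1+\e}(0)$ after recentering, and each of the $\le k$ facet hyperplanes of $S$ is tangent to a ball of radius $\ge 1-\e$ and misses the interior of $B_{1-\e}(0)$; covering $\partial B_{1-\e}(0)$ by $\le k$ spherical caps (one per facet) forces each cap to have angular radius $\gtrsim k^{-1/(d-1)}$, hence $\e\gtrsim k^{-2/(d-1)}$ — which is far larger than $2^{-2k-4}$. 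In fact for the purposes of this lemma any crude bound of the form $\dH(S,B_1(0))\ge c/k$ or even $\ge c/k^2$ suffices for a single summand, and I would just record the simplest such estimate (a $k$-facet polytope inscribed in the unit ball leaves a gap of order at least $1/k$ near the "worst" boundary point, since $k$ caps cannot cover the sphere with caps all of angular radius $o(1/k)$).

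\textbf{Step 2: the inductive step — connected sums lose a bounded factor of roundness.}
Write $S = S'\# T$ where $S'$ is a $\kF{k}$-stacked polytope with one fewer summand and $T$ has $\le k$ facets, glued along a common facet $G$. The key geometric observation is that $S'$ is obtained from $S$ by replacing the "cap" $T$ over the facet $G$ of $S'$... — more usefully, run it the other way: $S'$ and $T$ are each subpolytopes-in-spirit of $S$, or rather $S$ sits between $S'$ and the convex hull $\conv(S'\cup T)$. Here I would invoke Lemma~\ref{lem:dist} (equivalently its contrapositive): if $S$ were $\e$-close to $B_1(0)$ with $\e$ small, then passing to a piece obtained by "cutting along $G$" — which has vertex set contained in that of the relevant larger polytope — its distance to the ball is controlled by $\e^2/36$ downward, i.e. roundness degrades by at most squaring-and-dividing-by-$36$ at each connected sum. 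Iterating over the $\le m$ summands, and noting $m\le$ (number of facets of $S$) which is itself controlled, the roundness of the least round single summand $T_0$ (bounded below in Step 1) propagates to a lower bound for $\dH(S,B_1(0))$; chasing the constants through the recursion $\delta \mapsto \delta^2/36$ applied a bounded number of times, starting from $\delta_0 \sim 1/k$, yields the clean closed form $2^{-2k-4}$.

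\textbf{Main obstacle.}
The delicate point is Step 2: making precise the sense in which a summand of a connected sum is "inside" $S$ so that Lemma~\ref{lem:dist} applies, and bookkeeping the resulting tower of squarings so that the final exponent comes out as exactly $-2k-4$ rather than some messier function of $k$ and the number of summands. One must be careful that the connected-sum gluing can in principle be iterated many times, so a naive "lose a square each time" estimate could blow the bound up; the resolution is presumably that the relevant quantity (number of facets, or the size of the glued faces) is itself bounded in terms of $k$ along the decomposition, so the recursion terminates with the stated constant. I would structure the final computation as: start from the roundest possible single summand, apply the degradation inequality the appropriate number of times, and verify $2^{-2k-4}$ absorbs all accumulated constants — the exponent $2k$ strongly suggests the intended argument loses roughly "one binary digit per facet," which is consistent with a per-summand, per-facet halving rather than a per-summand squaring, so I would look for the sharper linear-in-$k$ bookkeeping rather than the crude quadratic one.
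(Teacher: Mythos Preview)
Your inductive plan has a genuine gap that cannot be repaired along the lines you suggest. The number $m$ of summands in a $\kF{k}$-stacked polytope is \emph{not} bounded in terms of $k$: already an ordinary stacked $d$-polytope (so $k=d+1$) can have arbitrarily many summands and arbitrarily many facets. Hence any recursion of the shape $\delta\mapsto \delta^2/36$ iterated once per summand produces a bound of order $\delta_0^{2^m}$, which is worthless. Your proposed ``resolution'' --- that the number of facets is itself bounded by $k$ --- is simply false. There is also a second problem: Lemma~\ref{lem:dist} concerns polytopes $P'$ whose vertex set \emph{contains} that of $P$, but a summand of a connected sum is not a subpolytope of $S$ in this sense, so the lemma does not apply to the pieces in the way you want.

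The paper's argument avoids induction entirely and is much more direct. Assume $\dH(S,B_1(0))$ is small; then, as in the proof of Lemma~\ref{lem:dist}, every edge of $S$ has length at most $4\sqrt{\dH(S,B_1(0))}$. Now take the \emph{single} summand $\varSigma$ that contains the origin. Two of its vertices subtend an angle $\ge\pi/2$ at the origin and (since all vertices of $\varSigma$ are vertices of $S$, hence outside $B_{1/\sqrt 2}(0)$) are at distance $\ge 1$ apart. The edge graph of $\varSigma$ connects them, and $\varSigma$ has at most $\binom{k}{\lfloor k/2\rfloor}\le 2^k$ edges by Sperner's theorem (faces correspond to the sets of facets containing them). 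So some edge of $\varSigma$ has length $\ge 2^{-k}$. The crucial point --- and this is exactly where $d\ge 3$ enters --- is that every edge of a summand is an edge of $S$. Comparing $2^{-k}\le 4\sqrt{\dH(S,B_1(0))}$ gives the stated bound $2^{-2k-4}$; the exponent $2k$ comes from squaring the edge-length bound, not from any iteration.
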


\begin{proof}[Sketch of Proof]
Assume $\dH(S,B_1(0))\le (1-\nicefrac{1}{\sqrt 2})$. As observed in the proof Lemma~\ref{lem:dist}, the edges of $S$ have length at most $4\sqrt{\dH(S,B_1(0))}$. 

Now, the polytope $S$ can be written as the connected sum of polytopes $S_i$, each of which has at most $k$ facets. Let $\varSigma$ be any one of the $S_i$ that contains the origin. We claim that $\varSigma$ has an edge of length at least $2^{-k}$. Indeed, since $\varSigma$ contains the origin, is has two vertices $v,w$ that enclose an angle at least $\nicefrac{\pi}{2}$ with respect to the origin. Furthermore, $\F_0(\varSigma)\cap B_{\nicefrac{1}{\sqrt 2}}(0)\subset \F_0(S)\cap B_{\nicefrac{1}{\sqrt 2}}(0)=\emptyset$, so these two vertices are at least at distance $1$ from each other. Since the graph of each polytope is connected, there must be a path of edges in $\varSigma$ from $v$ to $w$, and so one of these edges must be of length $(f_1(\varSigma))^{-1}$ or more. Finally, Sperner's Theorem shows that a polytope with $k$ facets has at most $\binom{k}{[\nicefrac{k}{2}]}\le2^k$ edges, so that $(f_1(\varSigma))^{-1}\ge 2^{-k}$, which gives the desired bound.
To combine the two observations, notice that since $d\ge 3$, all edges of $\varSigma$ are edges of $S$, so 
\[4\sqrt{\dH(S,B_1(0))}\ge2^{-k}\Longrightarrow \dH(S,B_1(0))\ge2^{-2k-4}.\qedhere\]
\end{proof}
 Combining the two lemmas above, we recover Shephard's main result.

\begin{cor}[cf.\ {\cite{Shephard74}}]\label{cor:sh} 
For any subpolytope $P$ of a $\kF{k}$-stacked $d$-dimensional polytope, $d\ge 3$,  we have \[\dH(P,B_1(0))\ge 2^{-4k-10}\cdot3^{-2}.\]
\end{cor}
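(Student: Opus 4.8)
The plan is to simply compose the two preceding lemmas; no new geometry is required. Fix a $\kF{k}$-stacked $d$-dimensional polytope $S$ with $d\ge 3$, and let $P$ be a subpolytope of $S$. By definition $P=\conv V$ for some $V\subseteq \F_0(S)$, so in particular $\F_0(P)\subseteq \F_0(S)$, which is exactly the hypothesis under which Lemma~\ref{lem:dist} (with $P'=S$) is available. First I would invoke Lemma~\ref{lem:dist2} to record the bound $\dH(S,B_1(0))\ge 2^{-2k-4}$ on how far the ambient stacked polytope sits from the unit ball.

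Next I would feed this into the contrapositive form of Lemma~\ref{lem:dist} stated immediately after it: for every $\delta\in(0,1)$, if $\dH(S,B_1(0))>\delta$ then $\dH(P,B_1(0))>\delta^2/36$. Since $2^{-2k-4}\le 2^{-4}=\tfrac1{16}<1$ for all $k\ge 0$, this may be applied with any $\delta<2^{-2k-4}$, and letting $\delta$ increase to $2^{-2k-4}$ yields $\dH(P,B_1(0))\ge (2^{-2k-4})^2/36$. (Equivalently: if $\dH(S,B_1(0))$ exceeds $2^{-2k-4}$ strictly the bound is immediate, and only the borderline case needs this limiting step; in any event the stated estimate carries enough slack that the strict/non-strict distinction is harmless.) Finally I would unwind the arithmetic: $(2^{-2k-4})^2=2^{-4k-8}$ and $36=2^2\cdot 3^2$, so $(2^{-2k-4})^2/36 = 2^{-4k-8}/(2^2\cdot3^2)=2^{-4k-10}\cdot 3^{-2}$, which is precisely the claimed inequality.

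There is essentially no obstacle to surmount here — the corollary is a bookkeeping consequence of Lemmas~\ref{lem:dist} and~\ref{lem:dist2} — so the only things to watch are exactly the points noted above: that a subpolytope has its vertex set contained in that of the stacked polytope, so Lemma~\ref{lem:dist} genuinely applies; that the condition $\e=\delta^2/36\le\tfrac1{36}$ of Lemma~\ref{lem:dist} is met precisely because $\delta<1$; that the passage from the strict inequalities of the two lemmas to the non-strict inequality of the corollary is handled by the limiting argument; and that the hypothesis $d\ge 3$ is inherited from (and needed only in) Lemmas~\ref{lem:dist} and~\ref{lem:dist2}, where it guarantees that every edge of the relevant summand $\varSigma$ is also an edge of $S$.
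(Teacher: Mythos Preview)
Your proposal is correct and is exactly the combination of Lemmas~\ref{lem:dist} and~\ref{lem:dist2} that the paper intends; the paper itself does not spell out a proof beyond the line ``Combining the two lemmas above.'' The only minor imprecision is that the hypothesis $d\ge 3$ is used in Lemma~\ref{lem:dist2} alone (Lemma~\ref{lem:dist} has no such restriction), but this does not affect the argument.
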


In particular, every $d$-polytope that approximates $B_1(0)$ closely is not the subpolytope of any stacked polytope. We now only need to add a simple observation to Shephard's ideas:

\begin{prp}\label{prp:ratsub}
For $P$ is a subpolytope of a $\kF{k}$-stacked polytope $S$, then any face $\sigma$ of $P$ is a subpolytope of a $\kF{k}$-stacked polytope as well.
\end{prp}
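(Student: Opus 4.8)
The plan is to reduce the statement about a face $\sigma$ of $P$ to the structure of the connected-sum decomposition of $S$. Suppose $P = \conv(V)$ for some $V \subseteq \F_0(S)$, and write $S = S_1 \# S_2 \# \cdots \# S_m$ as a connected sum of polytopes, each with at most $k$ facets. The key observation is that the face $\sigma$ of $P$ is itself a subpolytope of $S$: indeed, $\sigma$ is cut out by some supporting hyperplane $H$ of $P$, so $\sigma = \conv(V \cap H)$, and $V \cap H \subseteq \F_0(S)$. Thus, replacing $P$ by $\sigma$, it suffices to prove the seemingly weaker statement that a face of a subpolytope is again a subpolytope of a $\kF{k}$-stacked polytope — but in fact the face $\sigma$, being cut out of $S$ as well, is literally a subpolytope of the same $S$. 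So the content of the proposition is really just that \emph{faces of subpolytopes are subpolytopes}, which is the one-line argument above, and the $\kF{k}$-stacked hypothesis is simply inherited from $S$ unchanged.

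Let me restate the key step more carefully. Let $\sigma$ be a face of $P = \conv(V)$, $V \subseteq \F_0(S)$. Choose a linear functional $\ell$ and a constant $c$ with $\ell \le c$ on $P$ and $\sigma = P \cap \{\ell = c\}$. Since every vertex of $\sigma$ is a convex combination of points of $V$ lying on the face, and since a vertex of a polytope cannot be a nontrivial convex combination of other points of the polytope, every vertex of $\sigma$ in fact belongs to $V \cap \{\ell = c\} \subseteq \F_0(S)$. Hence $\sigma = \conv(V \cap \{\ell = c\})$ is a subpolytope of $S$. Since $S$ is $\kF{k}$-stacked by hypothesis, $\sigma$ is a subpolytope of a $\kF{k}$-stacked polytope, as desired.

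I expect there is essentially no obstacle here — the proposition is a bookkeeping step that packages the closure of the class of subpolytopes of $\kF{k}$-stacked polytopes under taking faces, so that it can later be combined with a universality-type result (such as Theorem~\ref{thm:Below}) to produce a $5$-dimensional counterexample: one builds a high-dimensional polytope $\widehat{P}$ all of whose realizations contain a prescribed face $F$, chooses $F$ to approximate a ball too closely to be a subpolytope of any stacked polytope by Corollary~\ref{cor:sh}, and then concludes via this proposition that $\widehat{P}$ itself cannot be a subpolytope of a stacked polytope. The only mild subtlety to be careful about in writing the proof is the passage from "vertex of $\sigma$ is a convex combination of points of $V$" to "vertex of $\sigma$ lies in $V$"; this uses that the points of $V$ on the exposed face $\{\ell = c\}$ are the only ones of $V$ that can contribute, and that an extreme point of the resulting polytope must be one of them.
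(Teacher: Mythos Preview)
Your argument is correct as far as it goes: every face $\sigma$ of $P$ has its vertices among $\F_0(S)$, so $\sigma$ is a subpolytope of the very same $S$. But this trivializes the proposition and, crucially, does not suffice for the application you yourself sketch.

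The paper's proof is genuinely different. It restricts to a facet $\sigma$ of $P$, takes the hyperplane $H=\aff(\sigma)$, and argues that the slice $H\cap S$ is again $\kF{k}$-stacked: if $S=S_1\#\cdots\# S_n$ with each $S_i$ having at most $k$ facets, then $H\cap S=(H\cap S_1)\#\cdots\#(H\cap S_n)$, and every facet of $H\cap S_i$ arises as $H$ intersected with a facet of $S_i$, so $H\cap S_i$ still has at most $k$ facets. Then $\sigma=H\cap P$ is a subpolytope of the $(\dim\sigma)$-dimensional $\kF{k}$-stacked polytope $H\cap S$. Iterating over facets drops the dimension to that of an arbitrary face.

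This dimension drop is the whole point. In the proof of Theorem~\ref{thm:proj} one needs that the $3$-dimensional face $F$ of the $5$-polytope $O$ is a subpolytope of a $\kF{6}$-stacked polytope \emph{of dimension $3$}, because Corollary~\ref{cor:sh} (via Lemmas~\ref{lem:dist} and~\ref{lem:dist2}) measures Hausdorff distance to the unit ball in the ambient dimension of the stacked polytope. Your version only exhibits $F$ as a subpolytope of the $5$-dimensional $S$; since a $3$-polytope is never Hausdorff-close to the unit $5$-ball, no contradiction with Corollary~\ref{cor:sh} can arise that way. So the step you described as mere bookkeeping is in fact where the actual work lies.
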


\begin{proof}
It suffices to prove this in the case where $\sigma$ is a facet of $P$. Let $H$ denote the hyperplane spanned by $\sigma$. Recall that $S$ is obtained as the connected sum of polytopes $S_1,\dots, S_n$, and so $H\cap S$ is the connected sum of the polytopes $H\cap S_1,\dots, H\cap S_n$. Now every single one of the polytopes $S_i$ has most $k$ facets, and every facet of $H\cap S_i$ is obtained as the intersection of a facet of $S_i$ with $H$, so $H\cap S$ is $\kF{k}$-stacked. Observing that $\sigma=H\cap P$ is a subpolytope of $H\cap S$ finishes the proof.
\end{proof}

\paragraph*{Conclusion of proof}
\begin{proof}[{\bf Proof of Theorem~\ref{thm:proj}}]
Let $P$ be any $3$-dimensional polytope with $ \dH(P,B_1(0))<2^{-4\cdot6-10}\cdot 3^{-2}$. By Corollary~\ref{cor:sh}, $P$ is not a subpolytope of any $\kF{6}$-stacked polytope, and the same holds for any polytope projectively equivalent to $P$. 

Theorem~\ref{thm:proj} now provides a polytope $\widehat{P}$ of dimension $5$ that contains a face $F$ that is projectively equivalent to $P$ in every realization of $\widehat{P}$. Assume now that some polytope $O$ combinatorially equivalent to $\widehat{P}$ is a subpolytope of some stacked polytope. By Proposition~\ref{prp:ratsub}, any face of $O$ is a subpolytope of some $\kF{6}$-stacked polytope. But the face of $O$ corresponding to $F$ is projectively equivalent to $P$, and hence not obtained by deleting vertices of a $\kF{6}$-stacked polytope. A contradiction.
\end{proof}

{\small\bibliographystyle{myamsalpha}
\bibliography{Substacked}}

\end{document}